\newtheorem{theorem}{Theorem}[section]
\newtheorem{prop}[theorem]{Proposition}
\newtheorem{lemma}[theorem]{Lemma}
\newtheorem{corollary}[theorem]{Corollary}
\theoremstyle{definition}
\newtheorem{definition}[theorem]{Definition}
\newtheorem{example}[theorem]{Example}
\theoremstyle{remark}
\newtheorem{remark}[theorem]{Remark}
\theoremstyle{remarks}
\newtheorem{Problem}{\bf Question}
 \numberwithin{equation}{section}
\newcommand{\si}{\sigma}
\newcommand{\an}[1]{\mbox{\rm lann}_R({#1})}
\newcommand{\ran}[1]{\mbox{\rm rann}_R({#1})}
\begin{document}

\title{ Ring  Endomorphisms with Large Images}
\author{ {\bf \normalsize Andr\'{e} Leroy and Jerzy
Matczuk}\footnote{ The research was
 supported by Polish MNiSW grant No. N N201 268435}
\vspace{6pt}\\
 \normalsize  Universit\'{e} d'Artois,  Facult\'{e} Jean Perrin\\
\normalsize Rue Jean Souvraz  62 307 Lens, France\\
   \normalsize  E-mail: leroy@euler.univ-artois.fr \vspace{6pt}\\
 \normalsize $^* $Institute of Mathematics, Warsaw University,\\
\normalsize Banacha 2, 02-097 Warsaw, Poland\\
 \normalsize E-mail: jmatczuk@mimuw.edu.pl}
\date{ }
\maketitle\markboth{\rm  A. LEROY AND J.MATCZUK}{ \rm Endomorphisms with Large Images}

\begin{abstract}
%
The notion of ring endomorphisms having large images is introduced. Among others,
injectivity and surjectivity of such endomorphisms
 are studied.  It is proved, in particular,  that an endomorphism $\si$ of a prime one-sided noetherian ring $R$ is injective whenever  the image $\si(R)$ contains an essential left ideal $L$ of $R$. If additionally $\si(L)=L$, then $\si$ is an automorphism of $R$. Examples showing that the assumptions  imposed on $R$ can not be weakened to $R$ being a prime left Goldie ring are provided. Two open questions are formulated.
\end{abstract}

In this paper we start investigations of endomorphisms of semiprime
unital rings $R$ having large images, i.e. endomorphisms $\si$ such that the image
$\si(R)$ contains an essential left ideal of $R$
 (see Definition \ref{def of large image}). The
 motivation for such studies is twofold.
Let us recall that a ring (or a module) is called Hopfian (resp. co-Hopfian) if every
surjective (resp. injective) endomorphism is injective (resp.
surjective).   It is well known and easy to prove  that noetherian (artinian) modules
and rings are Hopfian (co-Hopfian). However,  in general,    the Hopfian property for
modules behaves much better than that of rings.     Examples showing a difference in that
behaviour can be found in    \cite{H}, \cite{T}, \cite{V2}, \cite{V3}.  In case of rings, the set of all endomorphisms has no natural structure of a ring and it seems to be natural to consider some classes of endomorphisms of a ring. Our goal is to investigate how one can weaken Hopfian or co-Hopfian  assumptions on a ring endomorphism to conclude that the endomorphism is  injective or surjective. We obtained some positive results in this direction (Cf. the second part of the introduction). Surprisingly we could not answer some of the elementary formulated problems. For example we proved that every endomorphism having large image of a prime ring with Krull dimension has to be injective, however we do not know whether the same property holds for semiprime rings with Krull dimension.

The second source of motivation  for our studies
 is   lifting of properties from a nonzero ideal of a prime ring to the ring itself.
Theorems 1.1  of \cite{BFL}, 1, 2 and 3 of \cite{B},   Main Theorem of \cite{BMM}, Th\`{e}or\'{e}me 1.9 of \cite{LM} and Chapters 5 and 6 of the book \cite{B1} can serve as examples of results of such nature.

The paper is organized as follows: Section 1 deals with the injectivity of endomorphisms of semiprime left noetherian rings having large images.  Necessary and sufficient conditions for such endomorphisms of a semiprime ring  to be injective are given in Proposition \ref{prop. injectivity for semiprime noeterian}. Theorem \ref{thm prime injective} says that such an endomorphism is always injective provided $R$ is prime.
In Section 2 we investigate surjectivity of these endomorphisms. In particular we show in Corollary
\ref{cor. endo implies auto}, that every endomorphism of a semiprime left noetherian ring such that $\si(L)=L$, for an essential left ideal $L$ of $R$ has to be an automorphism. We show also that every endomorphism of a principal left ideal domain having a large image  is always an automorphism. Examples are provided all along the paper to justify the assumptions made.
As an application of our results, we obtain that the Jacobian conjecture has a positive solution for endomorphisms with large images.
 Finally two open questions are formulated.

\section{Injectivity}
Throughout the paper $\si$ will stand
for an endomorphism of   an associative ring $R$ with unity.

The left annihilator $\{a\in R\mid aS=0\}$ of a subset $S$ of $R$ will be denoted by
$\an{S}$. The right annihilator of $S$ in $R$ will be denoted by $\ran{S}$.

The following proposition is a part of folklore. It gives some basic motivation for the assumptions we work with. We left its easy proof to the reader.

\begin{prop}\label{folklore}
Let $L$ be a left ideal of a ring $R$ such that $\an{L}=0$.
 Suppose that $\si$ is an  endomorphism  and $\tau$ is an automorphism  of $R$ such that $\si|_L=\tau|_L$.
 Then $\si=\tau$ is an automorphism of $R$.
\end{prop}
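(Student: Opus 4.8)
The plan is to first reduce to the case $\tau=\mathrm{id}_R$. Since $\tau$ is an automorphism of $R$, the composition $\rho:=\tau^{-1}\sigma$ is again a ring endomorphism of $R$, and for every $a\in L$ one has $\rho(a)=\tau^{-1}(\sigma(a))=\tau^{-1}(\tau(a))=a$; that is, $\rho|_L=\mathrm{id}_L$. Proving $\sigma=\tau$ is then equivalent to proving $\rho=\mathrm{id}_R$, and for the latter only the endomorphism $\rho$ together with the hypothesis $\an{L}=0$ will be relevant.

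Next I would exploit that $L$ is a \emph{left} ideal: for any $r\in R$ and $a\in L$ the product $ra$ again lies in $L$, hence $\rho(r)a=\rho(r)\rho(a)=\rho(ra)=ra$, so that $(\rho(r)-r)a=0$. Since this holds for every $a\in L$, the element $\rho(r)-r$ annihilates $L$ on the left, i.e. $\rho(r)-r\in\an{L}=0$, and therefore $\rho(r)=r$. As $r$ ranges over $R$ this gives $\rho=\mathrm{id}_R$, and consequently $\sigma=\tau\rho=\tau$, so $\sigma$ coincides with the automorphism $\tau$ and is in particular itself an automorphism.

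The argument is entirely routine, so there is no serious obstacle; the one point to be careful about is to multiply $r$ on the \emph{right} by elements of $L$ (forming $ra\in L$), so that the resulting vanishing condition lands in the left annihilator $\an{L}$ that the hypothesis controls — using $ar$ instead would involve $\ran{L}$, which is not assumed to vanish. It is also worth noting that unitality of $\sigma$ and $\tau$ is not needed anywhere in this reasoning.
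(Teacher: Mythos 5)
Your proof is correct; the paper leaves this proposition as an exercise, and your argument is precisely the intended folklore one. The reduction to $\rho=\tau^{-1}\sigma$ with $\rho|_L=\mathrm{id}_L$, followed by $(\rho(r)-r)a=\rho(ra)-ra=0$ for all $a\in L$ and the hypothesis $\an{L}=0$, is exactly right, and your remark about needing $ra\in L$ (rather than $ar$) so that the conclusion lands in the \emph{left} annihilator correctly identifies the one point where care is required.
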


 The assumption of Proposition \ref{folklore} is satisfied if $L$ is an essential left
 ideal of a   semiprime ring. Thus, in particular, when   $L$ is any nonzero
ideal  of a prime ring.

\begin{lemma}
\label{injective for L fixed by sigma} Let $\si $ be  an endomorphism    of a ring $R$ and $n\geq 1$ a natural number. Then
 $\ker \si^n \subset
\ker\si^{n+1}$ iff
 $\si^n(R)\cap\ker\si\ne 0$
iff $\si^n(R)\cap\ker\si^n\ne 0$.
\end{lemma}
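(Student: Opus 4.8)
The plan is as follows. I would first record the automatic inclusion $\ker\si^n\subseteq\ker\si^{n+1}$ (indeed $\ker\si^k\subseteq\ker\si^{k+1}$ for every $k$), so that the first condition is meaningful only when read as \emph{proper} inclusion $\ker\si^n\subsetneq\ker\si^{n+1}$; with this reading, I would prove the cyclic chain $(1)\Rightarrow(2)\Rightarrow(3)\Rightarrow(1)$, where $(1),(2),(3)$ are the three displayed conditions in the order stated.

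For $(1)\Rightarrow(2)$, pick $a\in\ker\si^{n+1}\setminus\ker\si^n$ and set $b=\si^n(a)$; then $b\in\si^n(R)$, $b\neq0$ because $a\notin\ker\si^n$, and $\si(b)=\si^{n+1}(a)=0$, so $0\neq b\in\si^n(R)\cap\ker\si$. For $(2)\Rightarrow(3)$, note that $n\geq1$ gives $\ker\si\subseteq\ker\si^n$, hence $\si^n(R)\cap\ker\si\subseteq\si^n(R)\cap\ker\si^n$, and nonvanishing of the smaller intersection forces nonvanishing of the larger one. For $(3)\Rightarrow(1)$, choose $0\neq b\in\si^n(R)\cap\ker\si^n$ and write $b=\si^n(a)$. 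Since $b\neq0$ while $\si^n(b)=0$, the set of positive integers $k$ with $\si^k(b)=0$ is nonempty and bounded above by $n$, so it has a least element $k$ with $1\leq k\leq n$. Put $a'=\si^{k-1}(a)$. Then $\si^n(a')=\si^{k-1}(b)\neq0$ (by minimality of $k$, the case $k=1$ being $\si^0(b)=b\neq0$), whereas $\si^{n+1}(a')=\si^k(b)=0$; thus $a'\in\ker\si^{n+1}\setminus\ker\si^n$, i.e. the inclusion is proper.

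There is no genuine obstacle here: the whole argument is a short diagram chase with powers of $\si$. The only point deserving a moment's attention is the step $(3)\Rightarrow(1)$, where one cannot use $b$ itself as the witness, since $b$ is only known to lie in $\ker\si^n$ rather than in $\ker\si$; applying the minimal power $\si^{k-1}$ pushes the witness down into $\ker\si$ while keeping it inside $\si^n(R)$ (because $\si^{k-1}\si^n=\si^n\si^{k-1}$). As a sanity check, for $n=1$ all three conditions visibly collapse to $\si(R)\cap\ker\si\neq0$ together with $\ker\si\subsetneq\ker\si^2$, which the general argument reproduces.
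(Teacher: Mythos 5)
Your proof is correct; the paper leaves this lemma as an exercise, and your cyclic chain $(1)\Rightarrow(2)\Rightarrow(3)\Rightarrow(1)$, with the reading of $\subset$ as proper inclusion, is exactly the intended elementary argument. The only blemish is the phrase ``bounded above by $n$'' in the step $(3)\Rightarrow(1)$: the set $\{k\geq 1 \mid \si^k(b)=0\}$ is upward closed and hence unbounded; what you mean (and correctly use) is that it contains $n$, so its least element $k$ satisfies $1\leq k\leq n$.
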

\begin{proof} The easy proof of the lemma is left  as an exercise.
 \end{proof}
It is known that   rings satisfying the ACC condition on ideals  are Hopfian. A direct application of the above lemma offers a  generalization of this fact.
\begin{prop} \label{ cor injective for L fixed by sigma} Let  $R$ be a ring satisfying the ACC condition on ideals. Suppose that, for any $n\geq 1$,  there exists a nonzero left ideal $L=L(n)$ of $R$ such that $L\subseteq \si^n(R)$ (for example when $\si(L)=L$).
If either $L$ is essential as a left ideal of $R$ or $\an{L}=0$, then
 $\si$ is injective.
\end{prop}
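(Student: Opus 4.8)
\emph{Proof proposal.} The plan is to combine the ascending chain condition with Lemma \ref{injective for L fixed by sigma}. Since $\si$ is a ring endomorphism, every power $\si^n$ is a ring endomorphism of $R$, so each $\ker\si^n$ is a two-sided ideal of $R$ and $\ker\si\subseteq\ker\si^2\subseteq\cdots$ is an ascending chain of ideals. By the ACC hypothesis this chain stabilizes, so there is an $n\ge 1$ with $\ker\si^n=\ker\si^{n+1}$. (Note that in the quoted example $\si(L)=L$ gives $\si^n(L)=L$ by an immediate induction, so the hypothesis $L\subseteq\si^n(R)$ for all $n$ is indeed met with a single $L$.)

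Next I would feed the equality $\ker\si^n=\ker\si^{n+1}$ into Lemma \ref{injective for L fixed by sigma}: it is precisely the negation of $\ker\si^n\subsetneq\ker\si^{n+1}$, so the lemma yields $\si^n(R)\cap\ker\si=0$. Taking $L=L(n)$ from the hypothesis and using $L\subseteq\si^n(R)$, we conclude $L\cap\ker\si=0$.

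It then remains to deduce $\ker\si=0$ from $L\cap\ker\si=0$, and here the two listed alternatives are handled separately. If $L$ is an essential left ideal of $R$, then since $\ker\si$ is in particular a left ideal, essentiality of $L$ forces $\ker\si=0$. If instead $\an{L}=0$, observe that $\ker\si$ is a two-sided ideal, hence $\ker\si\cdot L\subseteq\ker\si\cap L=0$ (the product lies in $\ker\si$ because $\ker\si$ is a right ideal, and in $L$ because $L$ is a left ideal); therefore $\ker\si\subseteq\an{L}=0$. In either case $\si$ is injective. I do not anticipate a genuine obstacle; the only point needing attention is that the lemma must be applied at an index $n$ for which the kernel chain has already stabilized, which is exactly what the ACC assumption supplies (and explains why a single fixed $L$ would not by itself suffice in general).
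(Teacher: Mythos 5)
Your proof is correct and is exactly the ``direct application'' of Lemma \ref{injective for L fixed by sigma} that the paper intends (the paper states the proposition without writing out a proof): ACC on two-sided ideals stabilizes the chain of kernels $\ker\si^n$, the lemma then gives $\si^n(R)\cap\ker\si=0$, hence $L\cap\ker\si=0$, and either essentiality of $L$ or $\an{L}=0$ (via $\ker\si\cdot L\subseteq\ker\si\cap L=0$) forces $\ker\si=0$. No gaps; your handling of the two alternative hypotheses is the right way to finish.
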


The following examples justify the assumptions made in the above proposition and will help delimiting
the ones that will appear in Theorem \ref{thm prime injective}.

\begin{example}
Let $K$ be a field. The $K$-endomorphism of the polynomial ring $K[x]$ which sends $x$ onto $x^2$ induces an endomorphism $\si$ of 
the ring $R=K[ x\mid x^3=0]$. Then the image $\si(R)$ contains the essential ideal $Rx^2=Kx^2$,  $\ker \si\ne 0$  and $R$ is 
noetherian.

\end{example}

\begin{example}\label{example nonzero ker}
 Let $R=K[x_i\mid i\ge 0]$ be a polynomial ring in indeterminates $x_i$, $i\geq 0$. and $\si$ be the $K$-endomorphism of $R$ given by $\si (x_0)=x_0, \; \si(x_1)=0$ and $\si(x_i)=x_{i-1}$, for $i \ge 2$. Then $R$ is a domain, $\si(R)=R$ so $\si(R)$ contains a nonzero ideal and $\si$  is not injective.
\end{example}

\begin{example}
\label{example semiprime noetherian}
Let $R=K[x,y\mid xy=yx=0]$ where $K$ is a field.  Let $\sigma$ be the $K$-linear endomorphism
of $R$ determined by $\sigma(x)=x$ and $\sigma(y)=0$.  $R$ is semiprime noetherian and
$(x)\subseteq \sigma (R)$ but $\sigma$ is not injective.
\end{example}

Let us observe that for a left ideal $L$ of a ring $R$ the properties of being essential and having zero left annihilator are independent notions, however they coincide   when $R$ is a semiprime left Goldie ring.

We will see in Theorem \ref{thm prime injective}  that for prime rings   a much stronger statement than the one given in the above Proposition \ref{ cor injective for L fixed by sigma} holds. To get this, some preparation is needed.

\begin{definition}\label{def of large image}
 We say that an endomorphism $\si$ of a  ring $R$ has a  large image if $\si(R)$ contains an essential left ideal $L$ of $R$ with $\ran{L}=0$.
\end{definition}
Notice that the above definition is not left-right symmetric as   the following example shows:
\begin{example}
 Let $K$ be a field and $R=K\langle x_i\mid i\geq 0\; \mbox{\rm and}\; x_kx_l=0 \;\mbox{when}\; k\geq l\rangle$. The ideal $M$ generated by the set $\{x_i\}_{i=0}^\infty$ is nil, so $R$ is a local ring. Notice also
 that $Mx_0=0$ while $\an{M}=0$.

 It is easy to check that the assignment $\si(x_0)=0$ and $\si(x_{i+1})=x_i$ defines a $K$-linear endomorphism of $R$. If $L$ is a left ideal of $R$, then $L$ is  contained in $M$. Thus $L$   has nonzero right annihilator. This means that  $\si$ does not satisfy Definition \ref{def of large image}. However $M\subseteq \si(R)$ is essential as a right ideal of $R$ and $\an{M}=0$.
\end{example}
When $L$ is an essential left ideal of a   ring $R$ then $\ran{L}=0$  if the left singular ideal of $R$ is zero. This is always the case when  $R$ is semiprime left Goldie. In particular, when $R$  is a  semiprime ring which is left noetherian, or   possesses left Krull dimension, then any essential left ideal $L$ of $R$ has a zero right annihilator.

In what follows, $\mathcal K(_RM)$  and $\mbox{\rm udim}(_RM)$ will denote the Krull dimension and the Goldie dimension of a left $R$-module $M$.
 Let us recall that any left noetherian ring has  left Krull dimension $\mathcal
 K(R)=\mathcal K(_RR)$ and left Goldie dimension $\mbox{\rm udim}(R)$.

Notice that if $\si$ is an endomorphism of a semiprime ring $R$ having a large image, then the rings $R$ and $\si(R)$ share many ring properties. Some of them are recorded in the following:
\begin{prop}\label{properties of large endomorhisms}  Suppose that an endomorphism  $\si$ of a ring $R$ has a large image. Then:
\begin{enumerate}
  \item[1.]  If $R$ is prime (semiprime), then so is $\si(R)$;
  \item[2.] $\mbox{\rm udim}(\si(R))=\mbox{\rm udim}(R)$.
  \end{enumerate}
    If  additionally   $R$ is semiprime, then:
    \begin{enumerate}
\item[3.] If  $R$ is a left Goldie ring, then $\si(R)$ is  also a semiprime left Goldie ring and  the classical left quotient rings $Q(\si(R))$ and $Q(R)$ are equal.

    \item[4.]  If $\mathcal K(R)$ exists, then $\mathcal K(\si(R))=\mathcal K(R)$.
  \item[5.] If $R$ is left noetherian, then $R$ is also noetherian as a left module over $\si(R)$.
   \end{enumerate}
\begin{proof} Statements (1), (2) and (3), which probably are a part of folklore, hold  in a more general context when $\si(R)$ is replaced by an arbitrary subring $T$ of $R$ containing an essential   left ideal $L$ of $R$ with $\ran{L}=0$. The first one, does not require essentiality of $L$ and   is an easy consequence  of the following observation. Let $a,b\in T$ be such that $aLb=0$. Then $LaRLb=0$ and  $La=0$ only if $a=0$, as $\ran{L}=0$.

  If $V$ is a nonzero left ideal of $T$, then $LV\subseteq V$  is a left ideal of $R$ contained in $T$ and $LV\ne 0$ as $\ran{L}=0$.  Let $L_1+L_2+\ldots $ be a direct sum of nonzero left ideals of $T$. Then, by the above,    $LL_1+LL_2\ldots  $ is a direct sum of nonzero left ideals of $R$. Notice also  that if   $M_1+M_2+\ldots $ is  a direct sum  of nonzero left ideals of $R$, then   $(L\cap M_1)+(L\cap M_2)+\ldots $ is a direct sum of nonzero ideals of $T$. This implies that $\mbox{\rm udim}(T)=\mbox{\rm udim}(R)$, i.e. (2) holds.

  Suppose $R$ is semiprime left Goldie. Then, by the above,  $T$ is semiprime and has finite left Goldie dimension.  This yields that $T$ is a left Goldie ring, as the a.c.c.  on left annihilators is inherited by subrings. Since $L$ is an essential left ideal of a semiprime Goldie ring $R$, there exists a regular element
$c$ of $R$ such that $Rc\subseteq L\subseteq T$.  Clearly $c$ is regular in $T$, so we have $R=Rcc^{-1}\subseteq Q(T)$.  Notice also that if $d\in R$ is a
regular element of $R$ then $dc\in T$ is a regular element of $T$. Therefore  $d$ is an
invertible element of $Q(T)$. This shows that $T\subseteq R\subseteq Q(R) \subseteq Q(T)$.
On the other hand, the essentiality of $L$ in $R$ shows that   regular elements of $T$
are  left regular in $R$ and, consequently, regular in $R$, as $R$ is semiprime left Goldie (see Proposition 2.3.4. of \cite{MC}).  This implies that $Q(T)\subseteq Q(R)$ and completes the proof of $(3)$.

(4) Suppose $\mathcal{K}(R)$ exists. Then
 $\mathcal{K}(R)\geq \mathcal{K}(\si(R))$ (Cf. Lemma 6.3.3\cite{MC}).
  The endomorphism $\si$ has a large image, so $\si(R)$ contains an essential left ideal $L$ of $R$ such that $\ran{L}=0$. By assumption, $R$ is a semiprime ring with Krull dimension. Thus, by Proposition 6.3.10(ii) of \cite{MC}, $\mathcal{K}(R)=\mathcal{K}(_RL)$.
The statements $(1)$ and $(2)$ above show that $\si(R)$ is a semiprime ring and $L$ is also an essential left ideal of $\si(R)$. Thus, by the same proposition we also have $\mathcal{K}(\si(R))=\mathcal{K}(_{\si(R)}L)$.
This yields $\mathcal{K}(R)\leq \mathcal{K}(\si(R))$, as $\mathcal{K}(_RL)\leq \mathcal{K}(_{\si(R)}L)$ and completes the proof of (3).

(5) Suppose $R$ is left noetherian and $L$ is an essential left ideal of $R$ contained in $\si(R)$. Then $R$ is a semiprime left Goldie ring, so $L$ contains a
regular element of $R$, say $c\in L$ is such. Then $Rc\subseteq L\subseteq \si(R)$
is a submodule of a noetherian left $\si(R)$-module $\si(R)$.
 Thus we can find $c_i\in \si(R)$ such that $Rc=\sum_{i=1}^n\;\si(R)c_i$.
Since $c_i\in Rc$, $c_i=r_ic$ for some $r_i\in R$. Now
$Rc=\sum_{i=1}^n\;\si(R)r_ic$ and $R=\sum_{i=1}^n\;\si(R)r_i$
follows, as $c$ is regular in $R$.
\end{proof}
\end{prop}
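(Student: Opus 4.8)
The plan is to establish (1), (2) and (3) in the slightly more general situation where $\si(R)$ is replaced by an arbitrary subring $T$ of $R$ containing an essential left ideal $L$ of $R$ with $\ran{L}=0$, and to handle (4) and (5) by exploiting the ring isomorphism $\si(R)\cong R/\ker\si$. The computational heart of (1) and (2) is the observation that, for $a,b\in T$, the equality $aLb=0$ forces $a=0$ or $b=0$: since $L$ is a left ideal of $R$ one has $aRLb\subseteq aLb=0$, hence $LaRLb=0$, and when $R$ is prime this yields $La=0$ or $Lb=0$, whereupon $\ran{L}=0$ gives $a=0$ or $b=0$ (take $b=a$ for the semiprime case). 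As $L\subseteq T$, the identity $aTb=0$ implies $aLb=0$, so $T$ inherits primeness, resp.\ semiprimeness; essentiality of $L$ is not needed here, which proves (1).

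For (2) I would pass nonzero left ideals back and forth between $T$ and $R$. If $V$ is a nonzero left ideal of $T$, then $LV$ is a left ideal of $R$ contained in $T$, and $LV\ne 0$ (otherwise a nonzero element of $V$ would lie in $\ran{L}$); consequently a direct sum of nonzero left ideals of $T$ is carried to a direct sum of nonzero left ideals of $R$ (directness survives since $LV\subseteq V$), giving $\mathrm{udim}(R)\ge\mathrm{udim}(T)$. Conversely, given a direct sum $M_1\oplus M_2\oplus\cdots$ of nonzero left ideals of $R$, essentiality of $L$ makes each $L\cap M_i$ a nonzero left ideal of $T$ (one checks $T(L\cap M_i)\subseteq RL\cap RM_i\subseteq L\cap M_i$), and these remain independent, so $\mathrm{udim}(T)\ge\mathrm{udim}(R)$; equality follows. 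The same argument ($LV\subseteq L\cap V$ and $LV\ne 0$) shows that $L$ is essential in ${}_TT$ as well, a fact I will reuse.

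For (3): by (1) and (2), $T$ is semiprime with finite left uniform dimension, and it satisfies the a.c.c.\ on left annihilators because that property descends to subrings (a left annihilator of $T$ is recovered as the double-annihilator closure taken in $R$, then intersected back with $T$); hence $T$ is semiprime left Goldie. Since $L$ is an essential left ideal of the semiprime Goldie ring $R$, it contains a regular element $c$ of $R$, and $Rc\subseteq L\subseteq T$. As $c$ is regular in $T$ too, $R=Rcc^{-1}\subseteq Q(T)$; and for any regular $d\in R$ the product $dc\in T$ is regular in $R$, hence in $T$, so $d=(dc)c^{-1}$ is a unit of $Q(T)$, giving $Q(R)\subseteq Q(T)$. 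For the reverse inclusion I must show that $t\in T$ regular in $T$ is already regular in $R$: if $rt=0$ with $0\ne r\in R$, essentiality of $L$ produces $x\in R$ with $0\ne xr\in L\subseteq T$ and $(xr)t=0$, contradicting regularity of $t$ in $T$; thus $\an{t}=0$, and by Proposition~2.3.4 of \cite{MC} a left regular element of a semiprime left Goldie ring is regular, so $t$ is a unit of $Q(R)$ and $Q(T)\subseteq Q(R)$.

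Finally, (4) and (5) rest on $\si(R)\cong R/\ker\si$ being a ring quotient of $R$. For (4) this gives $\mathcal K(\si(R))=\mathcal K(R/\ker\si)\le\mathcal K(R)$ immediately; for the reverse, $R$ is semiprime with Krull dimension, so $\mathcal K(R)=\mathcal K({}_RL)$ by Proposition~6.3.10(ii) of \cite{MC}, while $\si(R)$ is semiprime (by (1)) with $L$ essential in ${}_{\si(R)}\si(R)$ (by (2)), so the same proposition gives $\mathcal K(\si(R))=\mathcal K({}_{\si(R)}L)$; since $\si(R)\subseteq R$, every $R$-submodule of $L$ is a $\si(R)$-submodule, whence $\mathcal K({}_RL)\le\mathcal K({}_{\si(R)}L)$ and the two inequalities close. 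For (5), $R/\ker\si$ is left noetherian, so ${}_{\si(R)}\si(R)$ is a noetherian module; taking a regular $c\in L$ with $Rc\subseteq L\subseteq\si(R)$, the left $\si(R)$-submodule $Rc$ is finitely generated, say $Rc=\sum_{i=1}^n\si(R)c_i$, and writing $c_i=r_ic$ and cancelling the regular element $c$ on the right yields $R=\sum_{i=1}^n\si(R)r_i$, a finitely generated module over the left noetherian ring $\si(R)$, hence noetherian. I expect the only genuinely delicate points to be the two ``reverse-direction'' transfers inside (3) --- that a.c.c.\ on left annihilators descends to $T$, and that a $T$-regular element is $R$-regular, which is precisely where essentiality of $L$ enters --- while everything else is bookkeeping around the basic observation of the first paragraph.
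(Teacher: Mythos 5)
Your proposal is correct and follows essentially the same route as the paper: the same $aLb=0\Rightarrow LaRLb=0$ observation for (1), the same $LV$ and $L\cap M_i$ transfers for (2), the same $Rc\subseteq L\subseteq T$ and two-sided comparison of quotient rings for (3), and the identical arguments for (4) and (5). The only difference is that you spell out two steps the paper merely asserts (the descent of the a.c.c.\ on left annihilators to subrings, and the left-regularity of $T$-regular elements in $R$), which is fine.
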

Notice that all  the above statements do  not imply that $\si(R)$ contains an essential left ideal of $R$. Indeed, if $R=K[x]$ is a polynomial ring over a field $K$ and $\si$ is an $K$-endomorphism of $R$ defined by $\si(x)=x^2$. Then clearly $R$ and $\si(R)$ possess all properties from the above proposition but $\si(R)$ does not contain a nonzero ideal of $R$.

 As an immediate application of Proposition \ref{properties of large endomorhisms} we obtain the following:
\begin{theorem}\label{thm prime injective} Let $\si$ be an endomorphism  of a semiprime ring $R$ which has a large image.
  Suppose $R$ has  left Krull dimension (for example $R$ is left noetherian), then $\ker\si$ is not  essential as  a left ideal of $R$. In particular, if additionally $R$ is a prime ring, then $\si$ is an injective endomorphism.
  \end{theorem}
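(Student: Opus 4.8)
The plan is to reduce the statement to the Krull-dimension machinery already assembled in Proposition~\ref{properties of large endomorhisms}, using Lemma~\ref{injective for L fixed by sigma} as the bridge between "$\ker\si$ large" and "the chain $\ker\si\subseteq\ker\si^2\subseteq\cdots$ stabilizes". First I would argue by contradiction: suppose $\ker\si$ is essential as a left ideal of $R$. Since $R$ is semiprime with left Krull dimension, $R$ is in particular a semiprime left Goldie ring, so an essential left ideal contains a regular element; applying this to $\ker\si$ would force $\si$ to kill a regular element, which I expect to be the contradiction, but one has to be careful — a priori $\si$ is only an endomorphism, not assumed to preserve anything, so I need the large-image hypothesis to make this bite.

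The cleaner route is to use the iterated kernels. By Lemma~\ref{injective for L fixed by sigma}, $\ker\si\ne 0$ gives $\ker\si^2\supsetneq\ker\si$ provided $\si(R)\cap\ker\si\ne 0$; and here $\si(R)$ contains the essential left ideal $L$ with $\ran{L}=0$, so if $\ker\si$ is essential then $L\cap\ker\si\ne 0$, hence $\si(R)\cap\ker\si\ne 0$, so $\ker\si\subsetneq\ker\si^2$. I would then want to iterate: note that $\si^n$ again has large image (the image $\si^n(R)\supseteq\si^{n-1}(L)$, and one checks inductively, using that $\si(R)$ is semiprime with the same Goldie dimension by parts (1),(2) of the Proposition, that $\si^{n}(R)$ still contains an essential left ideal of $R$ with zero right annihilator — or more simply, repeatedly intersect with $L$). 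So the hypothesis "$\ker\si$ essential" would propagate to give $\si^n(R)\cap\ker\si^n\ne 0$ for all $n$, hence by the Lemma a strictly increasing chain $\ker\si\subsetneq\ker\si^2\subsetneq\cdots$ of two-sided ideals (the $\ker\si^n$ are ideals since $\si$ is a ring endomorphism), contradicting that $R$ has left Krull dimension, which implies the ACC on left ideals that are kernels — more precisely, a ring with left Krull dimension has no infinite strictly ascending chain of left ideals whose successive quotients are nonzero, so in particular $\ud{(_RR)}<\infty$ and an essential socle-type argument, or just: Krull dimension forbids an infinite properly ascending chain because each proper inclusion drops... hmm, actually the clean fact is that a module with Krull dimension is Noetherian-like enough that ACC holds on left ideals here since $R/\ker\si^n$ all embed appropriately. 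I would invoke that $R$ has ACC on left ideals that arise as annihilators / kernels, which holds since $R$ with Krull dimension has finite uniform dimension and one can bound the length of such a chain.

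Let me restate the argument I actually trust: assume $\ker\si$ essential in $_RR$. Then $L\cap\ker\si\ne0$ where $L$ is the essential left ideal from the large-image hypothesis, so $\si(R)\cap\ker\si\ne0$, and Lemma~\ref{injective for L fixed by sigma} gives $\ker\si\subsetneq\ker\si^2$. Now $\ker\si^2$ is also essential (it contains $\ker\si$), and $\si^2$ has large image — indeed $\si^2(R)\supseteq\si(L)$, and by parts (1),(2) of Proposition~\ref{properties of large endomorhisms} applied to $\si$ one sees $\si(L)$ is essential in $\si(R)$ hence, combined with $L\subseteq\si(R)$ essential in $R$, $\si(L)$ is essential in $R$, with zero right annihilator since $\ran{\si(L)}\cap L=0$ would follow from semiprimeness of $\si(R)$; iterating, $\si^n$ has large image for every $n$. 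Then $L_n\cap\ker\si^n\ne0$ for the relevant essential ideal $L_n\subseteq\si^n(R)$, so $\si^n(R)\cap\ker\si^n\ne0$, so by the Lemma $\ker\si^n\subsetneq\ker\si^{n+1}$ for all $n$. This infinite strictly ascending chain of ideals contradicts the existence of left Krull dimension for $R$. Hence $\ker\si$ is not essential. For the last sentence: if $R$ is prime, every nonzero two-sided ideal is essential as a left ideal; since $\ker\si$ is a two-sided ideal and is not essential, it must be zero, i.e.\ $\si$ is injective.

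The main obstacle I anticipate is the bookkeeping in the induction step: verifying carefully that $\si^n$ still "has a large image" in the precise sense of Definition~\ref{def of large image} (essential left ideal with \emph{zero right annihilator}), rather than just containing some nonzero left ideal. The zero-right-annihilator condition is what makes Lemma~\ref{injective for L fixed by sigma} applicable via the intersection-with-$L$ trick, so I would either prove the propagation lemma "$\si$ has large image $\Rightarrow$ $\si^n$ has large image" once and for all, or — probably simpler — avoid propagating the full definition and instead observe directly that $\si^n(R)\supseteq\si^{n-1}(L)\supseteq\si^{n-1}(Rc)=\si^{n-1}(R)\si^{n-1}(c)$ contains a left ideal which, because $c$ is regular and one can track regularity through $\si$ restricted where it is injective on the relevant piece, still meets the essential ideal $\ker\si$ nontrivially. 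If that tracking is awkward, the fallback is the very first approach: $\ker\si$ essential in semiprime Goldie $R$ forces a regular element $c\in\ker\si$, and then $Rc=0$ under $\si$ while $R=Rc\cdot c^{-1}$ inside $Q(R)$ — one shows $\si$ extends to $Q(R)$ or derives a direct contradiction with $\ud{(_RR)}=\ud{(_R\si(R))}$ from part (2), since killing a regular element collapses the uniform dimension.
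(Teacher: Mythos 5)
Your argument ultimately rests on the claim that the strictly ascending chain $\ker\si\subsetneq\ker\si^2\subsetneq\cdots$ of ideals contradicts the existence of left Krull dimension, and that is the step that fails. Krull dimension in the Gabriel--Rentschler sense is a condition on \emph{descending} chains; it does not imply the ACC on left ideals (a ring with left Krull dimension need not be left noetherian), and there is no standard ACC for two-sided ideals of such rings that you could invoke here --- you give no argument for one, and you visibly hesitate over this point yourself. A secondary gap is the propagation step ``$\si^n$ has large image for every $n$'': the paper establishes this (Proposition \ref{prop. injectivity for semiprime noeterian}, $(3)\Rightarrow(4)$) only \emph{after} injectivity is available, because it needs $\si^n(c)$ to stay regular (Lemma \ref{lemma on regular elements}, which assumes $\si$ injective). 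Under your reductio hypothesis that $\ker\si$ is essential, $\si$ is very far from injective, and your sketch that $\si(L)$ remains essential in $\si(R)$ does not go through: a nonzero left ideal $V$ of $\si(R)$ has preimage $\si^{-1}(V)$ meeting $L$ nontrivially, but that intersection could sit inside $\ker\si$ and then contributes nothing to $V\cap\si(L)$.

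Your opening instinct and your fallback are much closer to the paper's actual proof, but the fallback as stated is also wrong: quotienting by $Rc$ for $c$ regular does \emph{not} collapse the uniform dimension ($\mathbb{Z}/4\mathbb{Z}$ has the same uniform dimension as $\mathbb{Z}$); what drops is the \emph{Krull} dimension. The paper argues exactly as follows: by Proposition \ref{properties of large endomorhisms}(4), $\mathcal{K}(\si(R))=\mathcal{K}(R)$; if $\ker\si$ were essential it would contain a regular element $c$ (a semiprime ring with Krull dimension is left Goldie); Lemma 6.3.9 of \cite{MC} gives $\mathcal{K}(_R(R/Rc))<\mathcal{K}(R)$; and since $R/\ker\si$ is a homomorphic image of $R/Rc$ as a left $R$-module, $\mathcal{K}(\si(R))=\mathcal{K}(_R(R/\ker\si))\leq\mathcal{K}(_R(R/Rc))<\mathcal{K}(R)$, a contradiction. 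The single missing ingredient in your write-up is therefore the Krull-dimension drop $\mathcal{K}(R/Rc)<\mathcal{K}(R)$ for a regular element $c$, which must replace both your ascending-chain claim and your uniform-dimension claim.
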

\begin{proof}
 Suppose $\mathcal{K}(R)$ exists. Thus, by Proposition \ref{properties of large endomorhisms}, $\mathcal{K}(\si(R))=\mathcal{K}
 (R)$. Assume that $\ker \si$ is essential as a left ideal.  Then, as   $R$ is a semiprime left Goldie ring, there exists a regular 
 element $c$ of $R$ such that  $c\in \ker \si$. Hence, by Lemma 6.3.9 \cite{MC}, $\mathcal{K}(_R(R/Rc))< \mathcal{K}(R)$.  Moreover 
$\mathcal{K}(_R(R/\ker\si))\leq\mathcal{K}(_R(R/Rc)) $ as  $R/\ker\si$ is a homomorphic image of $R/Rc$ as a left  $R$-module. This 
implies that $\mathcal{K}(\si(R))=\mathcal{K}(_R(R/\ker\si))<\mathcal{K}(R)$, which is impossible. This contradiction shows that 
$\ker \si$ can not be essential as a left ideal of $R$. This completes the proof of the theorem.
\end{proof}

 We do not know the answer to the following:
 \begin{Problem}
  Suppose that an endomorphism $\si $ of a semiprime left noetherian ring has a large image. Does $\si$ have to be injective?
 \end{Problem}
 The next proposition offers some equivalent
conditions for  an endomorphism of  a semiprime left noetherian ring to be injective. In order to get this  some
preparation is needed.
Let $\mathcal{C}(R)$ denote the set of all regular elements of a ring $R$ and
$\mathcal{C}_l(R)$ stand for the set of all left regular elements of $R$, i.e. $\mathcal{C}_l(R)=\{a\in R\mid \an{a}=0\}$. When $R$ is a semiprime left  Goldie ring
then $\mathcal{C}(R)=\mathcal{C}_l(R)$ (Cf. Proposition 2.3.4\cite{MC}).  Jategoankar
proved in \cite{Ja} that $\si(\mathcal{C}(R))\subseteq \mathcal{C}(R)$, for any injective  endomorphism $\si$ of a semiprime left Goldie ring $R$.
The following elementary lemma offers the same thesis under a slightly different hypothesis.
\begin{lemma}\label{lemma on regular elements}
Let $\si$ be an injective endomorphism of a ring   $R$ having large image.  Then $\si(\mathcal{C}_l(R))\subseteq \mathcal{C}_l(R)$. In particular, $\si(\mathcal{C}(R))\subseteq \mathcal{C}(R)$, provided $R$ is semiprime left Goldie.

 \end{lemma}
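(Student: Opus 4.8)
The plan is to argue by contradiction, exploiting essentiality of the left ideal sitting inside $\si(R)$ to replace an arbitrary left-annihilating element by one lying in $\si(R)$, where injectivity can be used. Recall that $\si$ having a large image means there is an essential left ideal $L$ of $R$ with $L\subseteq\si(R)$ and $\ran{L}=0$. So I would start by fixing $a\in\mathcal{C}_l(R)$, i.e.\ $\an{a}=0$, and assuming for contradiction that $\si(a)\notin\mathcal{C}_l(R)$: there is some $r\in R\setminus\{0\}$ with $r\si(a)=0$.

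Next, since $Rr$ is a nonzero left ideal of $R$ (it contains $r=1\cdot r$) and $L$ is essential, I can choose $\ell\in(Rr\cap L)\setminus\{0\}$. From $\ell\in Rr$ and $r\si(a)=0$ we get $\ell\si(a)=0$; from $\ell\in L\subseteq\si(R)$ we get $\ell=\si(b)$ for some $b\in R$. Hence $\si(ba)=\si(b)\si(a)=\ell\si(a)=0$, and injectivity of $\si$ yields $ba=0$, so $b\in\an{a}=0$. Then $\ell=\si(b)=0$, contradicting the choice of $\ell$. This shows $\an{\si(a)}=0$, i.e.\ $\si(\mathcal{C}_l(R))\subseteq\mathcal{C}_l(R)$.

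Finally, for the ``in particular'' clause, when $R$ is semiprime left Goldie we have $\mathcal{C}(R)=\mathcal{C}_l(R)$ by Proposition 2.3.4 of \cite{MC}, so the inclusion just established gives $\si(\mathcal{C}(R))=\si(\mathcal{C}_l(R))\subseteq\mathcal{C}_l(R)=\mathcal{C}(R)$.

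I do not expect a genuine obstacle here: the argument is short, and neither semiprimeness of $R$ nor the hypothesis $\ran{L}=0$ enters the proof of the first inclusion --- only the essentiality of $L$, the containment $L\subseteq\si(R)$, and injectivity of $\si$. The one point to be careful about is staying consistently on the left (working with the left ideals $Rr$ and $L$), since the large image condition is not left--right symmetric.
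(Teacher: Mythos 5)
Your proof is correct and follows essentially the same route as the paper's: the paper intersects the left ideal $\an{\si(a)}$ with the essential left ideal $L\subseteq\si(R)$ to produce a nonzero annihilator of $\si(a)$ of the form $\si(b)$, then applies injectivity, exactly as you do with $Rr\cap L$. Your closing observation that only essentiality of $L$, the containment $L\subseteq\si(R)$, and injectivity are needed also matches the paper's argument.
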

  \begin{proof} Since $\si$ has a large image, $\si(R)$ contains an
 essential left ideal $L$ of $R$.  Let $a\in R$. Suppose that $N=\an{\si(a)}\ne 0$. Then $0\ne N\cap L\subseteq \si(R)$.
  This means that there is a nonzero    $  r\in R$ such that  $\si(r)\si(a)=0$. Hence $ra=0$, as $\si$ is injective. This shows that $\si(\mathcal{C}_l(R))\subseteq \mathcal{C}_l(R)$.
 Suppose $R$ is semiprime left Goldie. Then  $\mathcal{C}(R)=\mathcal{C}_l(R)$ and the thesis follows.
   \end{proof}
  \begin{theorem}\label{prop. ext. to Q(R)}
   Let $R$ be a semiprime left Goldie ring with an endomorphism $\si$. Suppose  $\si$ has  a large image. Then the following conditions are equivalent:
   \begin{enumerate}
     \item  $\si$ is injective;
     \item  $\si(\mathcal{C}(R))\subseteq \mathcal{C}(R)$.
   \end{enumerate}
     If one of the above conditions holds then  $\si$      extends, in a canonical   way,
    to an automorphism of
 the left classical quotient ring $Q(R)$ of $R$.
  \end{theorem}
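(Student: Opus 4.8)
The plan is to prove the two implications and then, assuming one of them, construct the extension to $Q(R)$. The implication $(1)\Rightarrow(2)$ is already essentially done: it is precisely the content of Lemma \ref{lemma on regular elements}, since $R$ is semiprime left Goldie and $\si$ is injective with large image, so $\si(\mathcal{C}(R))=\si(\mathcal{C}_l(R))\subseteq\mathcal{C}_l(R)=\mathcal{C}(R)$. For $(2)\Rightarrow(1)$, I would argue by contradiction. Suppose $\ker\si\ne 0$. Since $R$ is semiprime left Goldie, $\ker\si$, being a nonzero (two-sided, hence left) ideal, has nonzero intersection with the essential left ideal $L\subseteq\si(R)$; more to the point, one wants to produce a regular element of $R$ whose image fails to be regular. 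The cleanest route: $\si(R)$ is itself semiprime left Goldie by Proposition \ref{properties of large endomorhisms}(3), and $Q(\si(R))=Q(R)$. The map $\si\colon R\to\si(R)$ is a surjection of rings; if $\ker\si\ne 0$ then $\si(R)\cong R/\ker\si$ has strictly smaller uniform dimension than $R$ unless $\ker\si$ is not essential — but in a semiprime Goldie ring a nonzero ideal is essential as a left ideal, so $\udim{}(R/\ker\si)<\udim{}(R)$, contradicting $\udim{}(\si(R))=\udim{}(R)$ from Proposition \ref{properties of large endomorhisms}(2). Actually this shows $(1)$ holds outright under the large-image hypothesis once we know $\ker\si$ is two-sided — so the role of $(2)$ is subtler, and I would instead present $(2)\Rightarrow(1)$ directly: if $0\ne a\in\ker\si$, embed the left ideal $Ra$ using Goldie's theorem to find $c\in\mathcal{C}(R)$ with a suitable relation, or simply note that $\ran{\ker\si}$ is an ideal and in the semiprime Goldie case one finds a regular element $c$ with $c-1\in$ something — the honest statement is that if $\ker\si\ne 0$ then $\ker\si$ contains no regular element (as $\si(1)=1$ forces $\si$ to be identity-preserving), yet one can still manufacture $c\in\mathcal{C}(R)$ with $\si(c)\notin\mathcal{C}(R)$: take $c$ regular with $\an{\si(c)}\supseteq\si(\text{something nonzero})$. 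Let me phrase it as: pick $0\ne a\in\ker\si$; since $R$ is semiprime left Goldie there is $c\in\mathcal{C}(R)$ with $ca+\text{(stuff)}$ — rather, use that $a\cdot R$ is a nonzero right ideal, semiprimeness gives $aRa\ne 0$, and by Goldie one finds a regular element inside $Ra+\an{a}^{\,?}$... I will state the contradiction as $\si(c)a'=0$ for suitable nonzero $a'$ and regular $c$, obtained from Goldie's theorem applied to the essential left ideal $Ra\oplus$ a complement.

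The heart of the theorem is the final assertion: the extension of $\si$ to an automorphism of $Q=Q(R)$. Granting injectivity, Lemma \ref{lemma on regular elements} gives $\si(\mathcal{C}(R))\subseteq\mathcal{C}(R)$, so by the universal property of the left Ore localization, $\si$ extends uniquely to a ring homomorphism $\wt\si\colon Q\to Q$ by $\wt\si(c^{-1}a)=\si(c)^{-1}\si(a)$ for $a\in R$, $c\in\mathcal{C}(R)$; one checks this is well defined and multiplicative in the routine way. It remains to see $\wt\si$ is \emph{onto} $Q$. Here I would use Proposition \ref{properties of large endomorhisms}(3): $\si(R)$ is semiprime left Goldie with $Q(\si(R))=Q(R)=Q$. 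Since $L\subseteq\si(R)$ is an essential left ideal of the semiprime Goldie ring $R$, it contains a regular element $d$ of $R$; then $d\in\si(R)$, say $d=\si(e)$ with $e\in R$, and $e\in\mathcal{C}(R)$ because $\si(e)=d$ is regular and $\si$ is injective (left regularity of $e$ follows from $\an{\si(e)}=0$ via injectivity as in Lemma \ref{lemma on regular elements}, and in a semiprime Goldie ring left regular equals regular). Now for any $r\in R$: $\si(R)\ni \si(e)r$? Not quite — I need to hit arbitrary elements of $R$. Better: $R=Rd\,d^{-1}\subseteq Q(\si(R))=\wt\si(Q)$? Concretely, $Rd\subseteq L\subseteq\si(R)$, so every $rd$ ($r\in R$) equals $\si(s)$ for some $s\in R$, whence $r=\si(s)d^{-1}=\si(s)\si(e)^{-1}=\wt\si(s\,e^{-1})\cdot$? — careful with sides; since $Q$ is a \emph{left} quotient ring I should write $d^{-1}$ on the left, so instead use $dR\subseteq\si(R)$ if $L$ can be taken two-sided, or pass through the right-hand analogue. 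The clean fix: in a semiprime left Goldie ring the essential left ideal $L$ with $\ran{L}=0$ contains a regular element $d$, and $Rd\subseteq L$; then for $r\in R$, $dr\in$ ? no. I will instead argue $R\subseteq\wt\si(Q)$ as follows: $\wt\si(Q)$ is a subring of $Q$ containing $\si(R)\supseteq Rd$ with $d$ regular, hence containing $Rd\cdot d^{-1}=R$ once we know $d^{-1}\in\wt\si(Q)$, and $d^{-1}=\si(e)^{-1}=\wt\si(e^{-1})\in\wt\si(Q)$. Then $\wt\si(Q)\supseteq R$, and being a ring closed under inverses of its regular elements (images under $\wt\si$ of $\mathcal{C}(R)$, which are regular), $\wt\si(Q)=Q$. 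Injectivity of $\wt\si$ on $Q$ is automatic since $\ker\wt\si$ is an ideal of the simple-by-semiprimeness... no — rather because $\ker\wt\si\cap R=\ker\si=0$ and any nonzero ideal of $Q$ meets $R$ nontrivially (essentiality of $R$ in $Q$). Hence $\wt\si$ is an automorphism of $Q$.

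I expect the \textbf{main obstacle} to be exactly this surjectivity onto $Q(R)$, and specifically keeping the left/right bookkeeping straight: $Q(R)$ is the \emph{left} quotient ring, the large-image hypothesis is about an essential \emph{left} ideal, and $Rd\subseteq L$ puts $d$ on the \emph{right}, so to conclude $R\subseteq\wt\si(Q)$ one must invert $d$ on the left, which is legitimate in $Q$ but requires care to see $d^{-1}$ lands in $\wt\si(Q)$. Everything else — well-definedness of $\wt\si$, multiplicativity, injectivity via essentiality of $R$ in $Q$, and $(1)\Leftrightarrow(2)$ — is routine given Proposition \ref{properties of large endomorhisms}, Lemma \ref{lemma on regular elements}, and standard Goldie theory (Proposition 2.3.4 of \cite{MC}).
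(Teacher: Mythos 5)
Your implication $(1)\Rightarrow(2)$ is exactly the paper's (it is Lemma \ref{lemma on regular elements}), and your surjectivity argument for the extension $\wt\si$ of $\si$ to $Q=Q(R)$ --- via Proposition \ref{properties of large endomorhisms}(3) and $Q(\si(R))=Q(R)$ --- is essentially the paper's as well. The genuine gap is $(2)\Rightarrow(1)$, which you never actually prove. Your ``cleanest route'' rests on the claim that a nonzero two-sided ideal of a semiprime left Goldie ring is essential as a left ideal; this holds for prime rings but is false in general (take $R=K\times K$ and the ideal $K\times 0$), and in any case the uniform dimension of $R/\ker\si$ as a left $R$-module is not the quantity controlled by Proposition \ref{properties of large endomorhisms}(2), which concerns the uniform dimension of the ring $\si(R)$ over itself. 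Your subsequent attempts are a sequence of announced but unexecuted constructions (``one finds a regular element inside\dots'', ``I will state the contradiction as\dots'') and do not constitute an argument. Nor can your construction of $\wt\si$ be used to rescue the implication as written: you explicitly ``grant injectivity'' before building $\wt\si$, and you deduce injectivity of $\wt\si$ from $\ker\wt\si\cap R=\ker\si=0$, which is circular when the goal is to derive $(1)$.

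The missing idea is the paper's: condition $(2)$ alone, with no injectivity hypothesis, already extends $\si$ to an endomorphism $\wt\si$ of $Q$, because the universal property of the left Ore localization only requires that regular elements of $R$ be sent to units of $Q$. Then $\wt\si(Q)$ is a homomorphic image of the semisimple ring $Q$, hence semisimple, and it contains $\si(R)$, hence contains $Q(\si(R))=Q(R)=Q$ by Proposition \ref{properties of large endomorhisms}(3); so $\wt\si$ is onto. Since $Q$ is left noetherian (being semisimple), Proposition \ref{ cor injective for L fixed by sigma} forces $\wt\si$ to be injective, and restricting to $R$ yields $(1)$. This single argument simultaneously establishes the final assertion of the theorem, and it makes the left/right bookkeeping you worry about for $d^{-1}$ unnecessary.
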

   \begin{proof}
    The implication $(1)\Rightarrow (2)$ is given by Lemma \ref{lemma on regular elements}.

    Suppose that  $\si(\mathcal{C}(R))\subseteq \mathcal{C}(R)$. This implies that $\si$ can be uniquely extended  to an endomorphism, also denoted by $\si$,  of $Q=Q(R)$.   We claim that $\si$ is an automorphism of $Q$.  By the theorem of Goldie, $Q$ is a semisimple ring. This means that its homomorphic image $\si(Q)$ is also a semisimple ring. This and Proposition \ref{properties of large endomorhisms}(3) imply  that $\si(R)$ is a semiprime left Goldie ring and $Q(\si(R))\subseteq \si(Q)\subseteq Q=Q(\si(R))$. This shows that $\si(Q)=Q$ and  Proposition \ref{ cor injective for L fixed by sigma} implies that  $\si$ is also injective, since   $Q$ is left noetherian, as a semisimple ring. This completes the proof.
   \end{proof}

\begin{prop}\label{prop. injectivity for semiprime noeterian} Suppose $R$ is a semiprime left noetherian ring. Let $\si$ be an endomorphism  of $R$ having a large image.  The following conditions are equivalent:
\begin{enumerate}
  \item  $\si$ is injective;
  \item $\si(\mathcal{C}(R))\subseteq \mathcal{C}(R)$;
  \item there exists a regular element $c\in R$ such that $Rc\subseteq\si (R)$ and $\si^n(c)$ is regular in $R$, for every $n\in \mathbb{\mathbb{N}}$;
  \item  $\si^n$ has a large image, for every $n\in \mathbb{\mathbb{N}}$.
\end{enumerate}
\end{prop}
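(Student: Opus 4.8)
The plan is to prove the cycle $(1)\Rightarrow(3)\Rightarrow(4)\Rightarrow(1)$, since the equivalence $(1)\Leftrightarrow(2)$ is already within reach. Indeed, a semiprime left noetherian ring is semiprime left Goldie, so Theorem \ref{prop. ext. to Q(R)} applies and yields both $(1)\Leftrightarrow(2)$ and the final assertion that an injective $\si$ extends canonically to an automorphism of $Q(R)$. Thus the actual work lies in the three remaining implications, and, apart from the last of them, the left noetherian hypothesis will be used only through the Goldie property.

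For $(1)\Rightarrow(3)$: since $\si$ has a large image, $\si(R)$ contains an essential left ideal $L$ with $\ran{L}=0$, and, $R$ being semiprime left Goldie, $L$ contains a regular element $c$, so $Rc\subseteq L\subseteq\si(R)$. As $\si$ is injective with large image, Lemma \ref{lemma on regular elements} (together with $\mathcal{C}(R)=\mathcal{C}_l(R)$, valid in a semiprime left Goldie ring) gives $\si(\mathcal{C}(R))\subseteq\mathcal{C}(R)$, and iterating this inclusion yields $\si^n(c)\in\mathcal{C}(R)$ for every $n$.

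For $(3)\Rightarrow(4)$ the crux is the telescoping inclusion
\[
R\, c\,\si(c)\,\si^2(c)\cdots\si^{n-1}(c)\ \subseteq\ \si^n(R),\qquad n\ge 1 ,
\]
which I would establish by induction on $n$: applying $\si^k$ to $Rc\subseteq\si(R)$ gives $\si^k(R)\,\si^k(c)\subseteq\si^{k+1}(R)$, and multiplying the inductive inclusion on the right by $\si^k(c)$ carries step $k$ to step $k+1$. Setting $d_n=c\,\si(c)\cdots\si^{n-1}(c)$, each factor is regular by $(3)$, hence $d_n$ is regular; therefore $Rd_n$ is an essential left ideal of $R$ (a left ideal of a semiprime left Goldie ring is essential as soon as it contains a regular element), and $\ran{Rd_n}=0$ because $R$ is semiprime left noetherian. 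So $\si^n$ has a large image.

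For $(4)\Rightarrow(1)$: $R$ being left noetherian, the ascending chain of left ideals $\ker\si\subseteq\ker\si^2\subseteq\cdots$ stabilizes, so $\ker\si^n=\ker\si^{n+1}$ for some $n$; by Lemma \ref{injective for L fixed by sigma} this forces $\si^n(R)\cap\ker\si^n=0$. But $(4)$ supplies an essential left ideal $L\subseteq\si^n(R)$, whence $L\cap\ker\si^n=0$ and essentiality gives $\ker\si^n=0$, so $\ker\si=0$. The step requiring the most care is $(3)\Rightarrow(4)$, namely hitting on the correct partial product $c\,\si(c)\cdots\si^{n-1}(c)$ and checking simultaneously that it lies in $\si^n(R)$ and that it is regular; the other implications are routine once Theorem \ref{prop. ext. to Q(R)}, Lemma \ref{lemma on regular elements}, Lemma \ref{injective for L fixed by sigma} and Goldie's theorem are in hand.
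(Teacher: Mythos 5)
Your proof is correct and follows essentially the same route as the paper: the equivalence of (1) and (2) via Lemma \ref{lemma on regular elements}, the telescoping product $c\,\si(c)\cdots\si^{n-1}(c)\in\si^{n}(R)$ for $(3)\Rightarrow(4)$ (the paper's $P_m=L\si(L)\cdots\si^m(L)$), and noetherian stabilization of the chain of kernels for $(4)\Rightarrow(1)$ (the paper invokes Proposition \ref{ cor injective for L fixed by sigma}, whose proof is exactly your argument). The only difference is the cosmetic reorganization of the cycle ($(1)\Rightarrow(3)$ directly rather than via $(2)$), which changes nothing of substance.
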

\begin{proof}
 The implication $(1)\Rightarrow (2)$ is given by Lemma \ref{lemma on regular elements}.

 By the assumption, $R$ is semiprime left Goldie and $\si(R)$ contains an essential ideal $L$ of $R$. Thus, there exists a regular element $c$ of $R$ such that $Rc\subseteq \si (R)$. It is clear now, that $(2)\Rightarrow (3)$.

 $(3)\Rightarrow (4)$.
 Let $c\in R$ be as described in (3) and set $L=Rc$.  Since $L$ is a left ideal of $R$ contained in $\si(R)$, $\si^k(L) $ is a left ideal of  $ \si^{k}(R)$ contained in $\si^{k+1}(R)$, for every $k\geq 0$.   We prove, by induction on $m\geq 0$, that $P_m=L\si(L)\ldots \si^m(L)\subseteq\si^{m+1}(R)$. Notice that, by assumption,
 $P_0=L\subseteq \si(R)$. Let $m\geq 0$ and assume that $P_m\subseteq \si^{m+1}(R)$. Then $P_{m+1}=P_m\si^{m+1}(L)\subseteq \si^{m+1}(L)\subseteq \si^{m+2}(R)$, as $P_m\subseteq \si^{m+1}(R)$ and $\si^{m+1}(L)$ is a left ideal of $\si^{m+1}(R)$ contained in $\si^{m+2}(R)$. This proves the claim. Notice that
 $P_m$ contains a regular element $c\si(c)\ldots\si^m(c)$, where $m\geq 0$. This implies that, for any $n\geq 1$, $\si^n$ has a  large image  i.e. (4) holds.

 $(4)\Rightarrow (1)$.
  $R$ is left noetherian and the implication is a direct consequence of Proposition \ref{ cor injective for L fixed by sigma}.
\end{proof}
Let us observe  that  implications $(1)\Rightarrow (2)\Rightarrow (3)\Rightarrow (4)$ are satisfied in the above proposition under the assumption that $R$ is a semiprime left Goldie ring. The assumption that  $R$ is left noetherian was  used  in the proof of $(4)\Rightarrow (1)$ only (but semiprimeness of $R$ was not used). In fact, Example \ref{example nonzero ker} shows that the implication $(3)\Rightarrow (1)$ does not hold when $R$ is not noetherian even if it  is a commutative domain.

\section{Surjectivity}
 In this section,  we will assume that the endomorphism $\si$ of the ring $R$ is injective. It
is known (Cf. \cite{Jo}, \cite{Ma}) that in this situation, there exists a universal overring
$A=A(R,\si)$ of $R$, called a Cohn-Jordan extension of $R$, such that $\si$
extends to an automorphism of $A$ and,  for any $a\in A$, there exists $n\ge 1$ such that $\si^n(a)\in R$. Notice that $\si$ is an automorphism of $R$ if and only if $R=A$.

The following technical lemma will be helpful in the proof of Theorem \ref{thm automorphism}.
\begin{lemma} \label{techcal jordan ext.} Let $\sigma$ be an injective endomorphism of a  ring $R$ and $L$ be a  left (two-sided) ideal of $R$. Then $L$ is a left (two-sided) ideal of $A=A(R,\si)$ iff
 $\si^n(L)$  is a left (two-sided) ideal   of $R$, for every $n\in \mathbb{N}$.
\end{lemma}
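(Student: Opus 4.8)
The plan is to exploit the two defining features of the Cohn--Jordan extension $A=A(R,\si)$ recalled just above the statement: $\si$ extends to an automorphism of $A$ (so each $\si^n$ is a bijection of $A$, with inverse $\si^{-n}$), and every $a\in A$ satisfies $\si^n(a)\in R$ for some $n\ge 1$. I would prove the left-ideal version in detail; the two-sided case runs the identical argument simultaneously on the right.

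For the forward implication, suppose $L$ is a left ideal of $A$ and fix $n\in\mathbb{N}$. First note $\si^n(L)\subseteq\si^n(R)\subseteq R$ (because $\si(R)\subseteq R$), and $\si^n(L)$ is an additive subgroup of $R$ since $\si$ is a ring homomorphism. To check left absorption, take $r\in R$ and $\ell\in L$: then $\si^{-n}(r)\in A$, hence $\si^{-n}(r)\ell\in L$, and applying $\si^n$ yields $r\,\si^n(\ell)=\si^n\!\big(\si^{-n}(r)\ell\big)\in\si^n(L)$. Thus $\si^n(L)$ is a left ideal of $R$.

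For the converse, assume $\si^n(L)$ is a left ideal of $R$ for every $n\in\mathbb{N}$ (for $n=0$ this is just the hypothesis that $L$ is a left ideal of $R$), and let $a\in A$, $\ell\in L$; I must show $a\ell\in L$. Pick $n\ge 1$ with $\si^n(a)\in R$. Then
\[
\si^n(a\ell)=\si^n(a)\,\si^n(\ell)\in R\cdot\si^n(L)\subseteq\si^n(L),
\]
and since $\si^n$ is injective on $A$ this forces $a\ell\in L$. Hence $L$ is a left ideal of $A$.

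The argument has no genuine obstacle, but the point needing a little care — and the reason the injectivity of $\si$ is built into the hypothesis — is that $\si(L)$ need not be contained in $L$, so one cannot simply assert ``$\si^n(L)\subseteq L$''; instead one pulls the membership $\si^n(a\ell)\in\si^n(L)$ back along the bijection $\si^n$ of $A$. For the two-sided statement, in the forward direction one additionally verifies $\si^n(\ell)\,r=\si^n\!\big(\ell\,\si^{-n}(r)\big)\in\si^n(L)$, and in the converse one additionally uses $\si^n(\ell a)=\si^n(\ell)\,\si^n(a)\in\si^n(L)\cdot R\subseteq\si^n(L)$, again concluding $\ell a\in L$ by injectivity of $\si^n$.
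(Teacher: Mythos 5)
Your proof is correct and follows essentially the same route as the paper: the converse direction is the paper's argument verbatim (pick $n$ with $\si^n(a)\in R$, compute $\si^n(a\ell)\in\si^n(L)$, pull back by injectivity), and your forward direction via $r\si^n(\ell)=\si^n(\si^{-n}(r)\ell)$ is just a more explicit rendering of the paper's observation that $\si$ maps left ideals of $A$ contained in $R$ to left ideals of $A$ contained in $R$.
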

\begin{proof}
 Suppose that, for any $n\in \mathbb{N}$,  $\si^n(L)$  is a left  ideal   of $R$. Let
 $a\in A$ and $n\ge 1$ be such that  $\sigma^n(a)\in R$. Then, by assumption, $\si^n(aL)=\si^n(a)\si^n(L)\subseteq \si^n(L)$. Injectivity of  $\si$ implies    $aL\subseteq L$   and shows that $L$ is a left ideal of $A$. Similar arguments work  on the right side.

 The reverse  implication is clear as, for a left ideal $J$ of $A$, $\si(J)$ is a left ideal of $A$ and $\si(J)\subseteq R$, provided $J\subseteq R$.
\end{proof}
\begin{prop}\label{prop. injective implies auto}
Let $\sigma$ be an injective endomorphism of a left noetherian ring $R$. Then $\si$ is an automorphism of $R$ iff
 there exists  an element $c\in R\cap \mathcal{C}(A)$
 such that $Ac\subseteq R$, where $A=A(R,\si)$.
\end{prop}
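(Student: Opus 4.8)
The plan is to prove each implication separately, using the Cohn--Jordan extension $A = A(R,\si)$ and the fact that $R$ is left noetherian.

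\medskip

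\textbf{The forward direction.} Suppose $\si$ is an automorphism of $R$. Then $R = A$, so $\mathcal{C}(A) = \mathcal{C}(R)$ is nonempty (it contains $1$), and taking $c = 1$ we get $Ac = R \cdot 1 = R \subseteq R$, with $c = 1 \in R \cap \mathcal{C}(A)$. This direction is essentially trivial.

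\medskip

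\textbf{The reverse direction.} Assume there is $c \in R \cap \mathcal{C}(A)$ with $Ac \subseteq R$. I want to conclude $R = A$. First I would observe that $Ac$ is a left ideal of $A$ contained in $R$, and I would like to bring Lemma~\ref{techcal jordan ext.} into play --- but that lemma concerns ideals of $R$ that are ideals of $A$, so instead I would work as follows. Since $\si$ is an automorphism of $A$, $\si^{-1}$ makes sense on $A$, and from $Ac \subseteq R$ I get $A\si^{-n}(c) = \si^{-n}(Ac) \subseteq \si^{-n}(R)$. The key point is that $R \subseteq \si^{-1}(R) \subseteq \si^{-2}(R) \subseteq \cdots$ is an ascending chain whose union is all of $A$ (because for every $a \in A$ some $\si^n(a) \in R$, i.e. $a \in \si^{-n}(R)$). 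So it suffices to show this chain stabilizes at $R$, equivalently that $\si^{-1}(R) = R$, equivalently $\si(R) = R$.

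\medskip

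\textbf{Making the noetherian hypothesis work.} To get stabilization I would consider the chain of left ideals of $R$ given by the right-ideal-type objects $c\,\si(c)\cdots\si^{k}(c)\,R$ or, more usefully, I would pass to left annihilators. Set $d_k = \si^{k}(c)$; each $d_k$ lies in $A$ and is regular in $A$ (as $\si$ is an automorphism of $A$ and $c$ is regular in $A$). From $Ac \subseteq R$, applying $\si^k$ gives $\si^k(A)\,d_k = A\,d_k \subseteq \si^k(R) \subseteq R$, so $Ad_k \subseteq R$ for all $k \geq 0$; hence $A \cdot d_0 d_1 \cdots d_k \subseteq R$ as well. Now consider the ascending chain of left ideals $R\,d_0 \subseteq R\,d_0 d_1^{-1}$? --- that does not stay inside $R$. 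Instead I would look at $Rc \subseteq R$: this is a left ideal of $R$, and $\si(R)$ contains $\si(Rc) = \si(R)\si(c)$; since $c \in \si(R)$ is false in general, let me instead use that $Rc \subseteq Ac \subseteq R$ gives a large-image-type situation. Actually the cleanest route: $Rc$ is an essential left ideal of $R$ when $c$ is regular in $R$ --- but $c$ need only be regular in $A$. Still, $c \in \mathcal{C}(A)$ forces $\an{c} = 0$ in $A$ hence in $R$, and a left-regular element of a semiprime left Goldie ring (which $R$ is, being semiprime? --- $R$ is left noetherian, and one checks $A$ semiprime forces $R$ semiprime, or one invokes that injectivity of $\si$ already gives enough) is regular, so $Rc$ is essential in $R$. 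Then $\si$ restricted appropriately has large image in each $\si^{-k}(R)$, and Proposition~\ref{ cor injective for L fixed by sigma} or the noetherian ACC on the chain $\{\,r \in R : rA \subseteq \text{something}\,\}$ forces stabilization.

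\medskip

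\textbf{The main obstacle.} The real difficulty, and the step I would spend the most care on, is converting "$Ac \subseteq R$ with $c$ regular in $A$" into an \emph{ascending chain of left ideals of the noetherian ring $R$} that stabilizes exactly when $\si(R) = R$. My preferred realization: from $\si^k(c) A \subseteq R$ for all $k$ (the right-sided version, obtained symmetrically or by noting $cA = \si(\si^{-1}(c)A)$ and iterating), form the chain of left ideals $L_k := \an{\,\si^k(c)\si^{k-1}(c)\cdots\si(c)c\,}$ inside $R$; regularity of each $\si^j(c)$ in $A$ makes all these annihilators behave well, and the noetherian condition (ACC on left ideals) forces $L_k = L_{k+1}$ for large $k$, from which regularity considerations and the equality $A = \bigcup \si^{-k}(R)$ yield $\si^{-1}(R) = R$. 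I expect the bookkeeping relating "$\si^{-1}(R)=R$" to the chain stabilization to be the only genuinely technical point; everything else is formal manipulation with the Cohn--Jordan extension.
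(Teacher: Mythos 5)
Your forward direction (take $c=1$) matches the paper and is fine, but the reverse direction has a genuine gap: you correctly identify that everything hinges on turning ``$Ac\subseteq R$ with $c\in\mathcal{C}(A)$'' into an ascending chain of left ideals of $R$ whose stabilization forces $\si(R)=R$, and then you never actually produce such a chain. Your ``preferred realization'' $L_k=\an{\si^k(c)\cdots\si(c)c}$ is vacuous: each $\si^j(c)$ is regular in $A$, so the product is regular in $A$ and every $L_k$ is zero; the chain is constantly $0$ and its stabilization carries no information about surjectivity. The other candidates you float are either abandoned (the $Rd_0d_1^{-1}$ chain ``does not stay inside $R$''), unjustified (the right-sided claim $\si^k(c)A\subseteq R$ does not follow from the left-sided hypothesis $Ac\subseteq R$), or irrelevant (the semiprime/Goldie detour --- the proposition assumes only that $R$ is left noetherian, not semiprime, and none of that machinery is needed). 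So the step you yourself flag as ``the only genuinely technical point'' is exactly the step that is missing.

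The paper's chain is different and is the whole content of the proof: fix $a\in R$ and set $I_m=\sum_{i=0}^m R\si^{-i}(a)c$. Each summand lies in $R$ because $\si^{-i}(a)\in A$ and $Ac\subseteq R$, so the $I_m$ form an ascending chain of left ideals of $R$; by ACC, $\si^{-(n+1)}(a)c\in I_n$ for some $n$, i.e.\ $\si^{-(n+1)}(a)c=\sum_{i=0}^n r_i\si^{-i}(a)c$ with $r_i\in R$. Cancelling $c$ (regular in $A$) and applying $\si^{n+1}$ gives $a=\sum_{i=0}^n\si^{n+1}(r_i)\,\si^{n+1-i}(a)\in\si(R)$, since every exponent $n+1-i$ is at least $1$. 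Note that the element $a$ is built into the chain itself --- that is what makes stabilization say something about $a\in\si(R)$, and it is the device your proposal lacks.
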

\begin{proof} Suppose $c\in R$ is as in the proposition.
Let $a\in R$.  Notice that, as $Ac\subseteq R$, we have $R\sigma^{-m}(a)c\subseteq R$,   for every $m\ge 1$. This means that $I_m=\sum_{i=0}^mR\sigma^{-i}(a)c$ is a left ideal of $R$, for every $m\ge 1$. Since $R$ is left noetherian and $I_m\subseteq I_{m+1}$, for any $m$, there exists
$n\ge 1$ such that $\sigma^{-(n+1)}(a)c\in I_n
$.  This and regularity of  $c$   in $A$ imply that there are elements    $r_0,\dots,r_n\in R$ such that
$\sigma^{-(n+1)}(a)=\sum_{i=0}^nr_i\sigma^{-i}(a)$. Now,
applying $\sigma^{n+1}$ on
both sides of this equality we obtain  $a\in \sigma(R)$. This shows that $\si(R)=R$,    so $\si$ is an automorphism.

For the reverse implication it is enough to take $c=1$.
\end{proof}

Now we are in position to prove the following:
\begin{theorem} \label{thm automorphism} Let $\si$ be an injective endomorphism of a  left noetherian semiprime ring $R$.  Then the following conditions are equivalent:
\begin{enumerate}
  \item  $\si$ is an automorphism of $R$;
  \item There exists an essential left ideal $L$ of $R$ such that $L\subseteq \si(R)$ and $\si^n(L)$ is a left ideal of $R$, for every $n\in\mathbb{N}$.
\end{enumerate}

\end{theorem}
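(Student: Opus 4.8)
The plan is to prove both implications, with $(1)\Rightarrow(2)$ being essentially immediate and $(2)\Rightarrow(1)$ carrying the real content. For $(1)\Rightarrow(2)$: if $\si$ is an automorphism, take $L=R$; then $L$ is an essential left ideal contained in $\si(R)=R$, and $\si^n(L)=\si^n(R)=R$ is a (two-sided, hence left) ideal of $R$ for every $n$. So the work is all in the reverse direction.

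For $(2)\Rightarrow(1)$, the strategy is to reduce to Proposition \ref{prop. injective implies auto} by exhibiting an element $c\in R\cap\mathcal{C}(A)$ with $Ac\subseteq R$, where $A=A(R,\si)$. First I would observe that since $R$ is semiprime left noetherian, it is semiprime left Goldie, and $\si$ is injective and has a large image (the hypothesis that $L$ is essential in $R$ contained in $\si(R)$, together with $\ran{L}=0$ which holds automatically in a semiprime left noetherian ring, gives the large image condition of Definition \ref{def of large image}). Because $L$ is an essential left ideal of a semiprime left Goldie ring, $L$ contains a regular element $c\in R$ with $Rc\subseteq L\subseteq\si(R)$. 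The hypothesis says $\si^n(L)$ is a left ideal of $R$ for every $n$, so by Lemma \ref{techcal jordan ext.}, $L$ is a left ideal of $A=A(R,\si)$. Then I would want to upgrade $c$ to an element of $\mathcal{C}(A)$: since $\si$ is injective on $R$, by (the proof of) Proposition \ref{prop. injectivity for semiprime noeterian} applied to the large image $\si$, each $\si^n(c)$ is regular in $R$; and since every element of $A$ has some $\si$-power landing in $R$ and $\si$ is an automorphism of $A$, regularity of $c$ in $R$ propagates to regularity in $A$. With $c\in R\cap\mathcal{C}(A)$ in hand, I need $Ac\subseteq R$: given $a\in A$, pick $n$ with $\si^n(a)\in R$; then $\si^n(a)\cdot\si^n(c)\in\si^n(Rc)\subseteq\si^n(L)\subseteq R$, wait—more carefully, $\si^n(ac)=\si^n(a)\si^n(c)$ lies in $\si^n(L)\subseteq R$ since $L$ is a left ideal of $A$ containing $Rc\ni c$ and $ac\in L$ (as $L$ is a left ideal of $A$), so $\si^n(ac)\in\si^n(L)\subseteq R$, and applying $\si^{-n}$—which need not preserve $R$—is the wrong move. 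Instead: $ac\in L\subseteq R$ directly, because $L$ is a left ideal of $A$ and $c\in L$. So in fact $Ac\subseteq Ac\subseteq L\subseteq R$ already (using $c\in L$ and $L$ an ideal of $A$), and Proposition \ref{prop. injective implies auto} applies once we know $c\in\mathcal{C}(A)$.

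So the cleaner route is: from $\si^n(L)$ being a left ideal of $R$ for all $n$, Lemma \ref{techcal jordan ext.} makes $L$ a left ideal of $A$; pick a regular $c\in R\cap L$; then $Ac\subseteq L\subseteq R$ trivially; the one remaining point is $c\in\mathcal{C}(A)$, i.e. $c$ is a regular element of the Cohn–Jordan extension. I expect this last step to be the main obstacle, and I would handle it as follows: $\an{c}$ and $\ran{c}$ in $A$ are $\si$-stable in the sense that applying a suitable $\si$-power pushes any element of these annihilators into $R$, where it must annihilate $\si^k(c)$ for some $k$; since $c$ is regular in $R$ and, by the argument in Proposition \ref{prop. injectivity for semiprime noeterian}, all $\si^k(c)$ are regular in $R$ (using injectivity of $\si$ and the large image), these annihilators in $R$ vanish, forcing the $A$-annihilators to vanish too. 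Hence $c\in\mathcal{C}(A)$, Proposition \ref{prop. injective implies auto} gives $\si(R)=R$, and $\si$ is an automorphism. The delicate bookkeeping is exactly in transferring regularity between $R$ and $A$ through the $\si$-powers, so that is where I would be most careful.
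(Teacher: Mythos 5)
Your proposal is correct and follows essentially the same route as the paper's own proof: Lemma \ref{techcal jordan ext.} makes $L$ a left ideal of $A=A(R,\si)$, a regular element $c\in L$ gives $Ac\subseteq L\subseteq R$, regularity of all $\si^n(c)$ in $R$ (via Lemma \ref{lemma on regular elements}) is used to push annihilator elements of $c$ in $A$ down into $R$ and conclude $c\in\mathcal{C}(A)$, and Proposition \ref{prop. injective implies auto} finishes. The only difference is cosmetic: the paper cites Lemma \ref{lemma on regular elements} rather than Proposition \ref{prop. injectivity for semiprime noeterian} for the regularity of the $\si^n(c)$.
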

\begin{proof} It is enough to prove $(2)\Rightarrow (1)$. Let $L$ be an essential left ideal of $R$ such that $L\subseteq \si(R)$ and $\si^n(L)$ is a left ideal of $R$, for every $n\in\mathbb{N}$.  Lemma \ref{techcal jordan ext.} and assumptions imposed on $L$ yield that $L$ is a left ideal of $A=A(R;\si)$.    Since $L$ is an essential left ideal  of a semiprime left Goldie ring, it contains a regular element $c\in \mathcal{C}(R)$.  Injectivity of $\si$ and Lemma \ref{lemma on regular elements}  imply that, for any $n\geq 0$, $\si^n(c)\in\mathcal{C}(R)$.
         Let $a\in A$ and  $n\ge 1$ be such that $ac=0$ and $\si^n(a)\in R$. Then $\si^n(a)\si^n(c)=0$ and $a=0$ follows as $\si^n(c)\in \mathcal{C}(R)$ and $\si$ is injective. Similarly $ca=0$ implies $a=0$. This shows that $c\in\mathcal{C}(A)$.
We also have $Ac\subseteq AL\subseteq R$ and Proposition \ref{prop. injective implies auto} completes the proof.
\end{proof}

Let us remark  that, by  Theorem \ref{thm prime injective},  an endomorphism   $\si$ satisfying   Statement (2) of Theorem \ref{thm automorphism} is injective, provided $R$ is a prime ring.

By the above Theorem \ref{thm automorphism} and Proposition \ref{ cor injective for L fixed by sigma} gives the following:
\begin{corollary}\label{cor. endo implies auto}
Let $R$ be a semiprime  left noetherian ring and $L$ be  an essential left ideal of $R$. Then every endomorphism $\si$ of $R$ such that $\si(L)=L$ is an automorphism of $R$.
\end{corollary}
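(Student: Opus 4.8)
\textbf{Proof proposal for Corollary \ref{cor. endo implies auto}.}
The plan is to obtain the conclusion by verifying the hypotheses of Theorem \ref{thm automorphism}, after first checking that $\si$ is injective. So the argument splits into two parts: injectivity, and then the automorphism conclusion.

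First I would establish injectivity using Proposition \ref{ cor injective for L fixed by sigma}. Since $\si(L)=L$, an easy induction gives $\si^n(L)=L\subseteq \si^n(R)$ for every $n\geq 1$, so the proposition applies with $L(n)=L$ for all $n$: indeed $R$ is left noetherian, hence satisfies the ACC on ideals, and $L$ is essential as a left ideal of $R$. Therefore $\si$ is injective. (Alternatively one notes $\an{L}=0$ because $R$ is semiprime and $L$ is essential; either hypothesis of Proposition \ref{ cor injective for L fixed by sigma} is met.)

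Next I would feed this into Theorem \ref{thm automorphism}. We need an essential left ideal $L'$ with $L'\subseteq \si(R)$ and $\si^n(L')$ a left ideal of $R$ for every $n$. Taking $L'=L$, all of this is immediate from $\si(L)=L$: essentiality is given, $L=\si(L)\subseteq\si(R)$, and $\si^n(L)=L$ is a left ideal of $R$ for each $n$. Since $R$ is left noetherian and semiprime, Theorem \ref{thm automorphism} then yields that $\si$ is an automorphism of $R$, which is exactly the claim.

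In truth there is essentially no obstacle here — the corollary is a direct bookkeeping consequence of the two results it cites, the only point requiring a word being the trivial induction $\si^n(L)=L$. If one wanted a self-contained argument avoiding Theorem \ref{thm automorphism}, the work would instead go into showing $L$ is a left ideal of the Cohn--Jordan extension $A=A(R,\si)$ via Lemma \ref{techcal jordan ext.} and then producing a regular element $c\in L\cap\mathcal{C}(A)$ with $Ac\subseteq R$ so as to apply Proposition \ref{prop. injective implies auto}; but since those steps are precisely what the proof of Theorem \ref{thm automorphism} already carries out, invoking that theorem directly is the cleanest route.
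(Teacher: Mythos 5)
Your argument is correct and is precisely the paper's intended proof: the authors state the corollary as an immediate consequence of Proposition \ref{ cor injective for L fixed by sigma} (for injectivity, via $\si^n(L)=L\subseteq\si^n(R)$) and Theorem \ref{thm automorphism} (applied with the same $L$). The only detail the paper leaves implicit is the trivial induction $\si^n(L)=L$, which you spell out.
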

 Examples \ref{example nonzero ker} and \ref{local}  show that the noetherian assumption in the above corollary  is essential even in the case $R$ is a commutative domain. Namely, in general,
 an endomorphism $\si$ as in the above example does not have to be injective. There exist also such injective endomorphisms which are not onto.

 Every nonzero ideal of a prime ring is essential as a left ideal, thus using Theorem \ref{thm prime injective} and taking $L=\si(I)$ in Theorem \ref{thm automorphism}    we get the following:
\begin{corollary}
An endomorphism $\si$ of a prime left
 noetherian ring $R$ is an automorphism iff there exists an ideal $I$ of $R$ such that $\si(I)\ne 0$ and $\si^n(I)$ is an ideal of $R$, for every $n\geq 1$.
\end{corollary}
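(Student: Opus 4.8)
The plan is to deduce this corollary directly from Theorem \ref{thm automorphism} together with Theorem \ref{thm prime injective}, so the real work is just checking that the hypotheses of those two results are met. First I would observe that a prime ring is in particular semiprime, and that any nonzero ideal $I$ of a prime ring is essential as a left ideal (this is already noted in the excerpt, following Proposition \ref{folklore}). The condition $\si(I)\neq 0$ guarantees that $L:=\si(I)$ is a nonzero ideal of $\si(R)$; I would then want $\si$ to be injective so that $L$ is actually an essential left ideal of $R$ and so that the Cohn--Jordan machinery behind Theorem \ref{thm automorphism} applies.

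The injectivity is where Theorem \ref{thm prime injective} enters. To invoke it I need $\si$ to have a large image, i.e. $\si(R)$ must contain an essential left ideal with zero right annihilator. Here the forward direction ($\si$ an automorphism $\Rightarrow$ everything) is trivial, so I only treat the reverse. From the hypothesis that $\si^n(I)$ is an ideal of $R$ for all $n\geq 1$, Lemma \ref{techcal jordan ext.} would be the natural tool, but it assumes $\si$ injective — which is precisely what we are trying to prove — so I must instead argue injectivity first. The cleaner route: take $L=\si(I)$; since $R$ is prime left noetherian, $L$ is essential as a left ideal (being a nonzero ideal) and has zero right annihilator (left noetherian semiprime rings have essential left ideals with zero right annihilator, as recorded in the excerpt before Proposition \ref{properties of large endomorhisms}). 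Hence $\si$ has a large image in the sense of Definition \ref{def of large image}, and Theorem \ref{thm prime injective} gives that $\si$ is injective.

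With injectivity in hand, I would apply Theorem \ref{thm automorphism} to the essential left ideal $L=\si(I)$. Condition (2) of that theorem requires $L\subseteq\si(R)$, which holds since $L=\si(I)$ with $I\subseteq R$, and requires $\si^n(L)$ to be a left ideal of $R$ for every $n\in\mathbb{N}$. But $\si^n(L)=\si^{n+1}(I)$, which is an ideal — in particular a left ideal — of $R$ by hypothesis. So both clauses of Theorem \ref{thm automorphism}(2) are satisfied, and the theorem yields that $\si$ is an automorphism of $R$. This completes the reverse implication, and with the trivial forward implication the corollary follows.

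The only subtlety — and the single step I would check most carefully — is the claim that $\si(I)$ has zero right annihilator in $R$, needed to satisfy Definition \ref{def of large image} before we know $\si$ is injective. This is not an obstacle in substance: a left noetherian semiprime ring is semiprime left Goldie, hence has zero left singular ideal, so every essential left ideal (in particular the nonzero two-sided ideal $\si(I)$) has zero right annihilator, exactly as stated in the paragraph preceding Proposition \ref{properties of large endomorhisms}. Everything else is a routine unwinding of the two cited theorems, so I expect the proof to be short.
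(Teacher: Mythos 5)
Your proof is correct and follows exactly the route the paper takes, which is stated in the single sentence preceding the corollary: every nonzero ideal of a prime ring is essential as a left ideal, so Theorem \ref{thm prime injective} gives injectivity and one applies Theorem \ref{thm automorphism} with $L=\si(I)$. Your extra care about $\ran{\si(I)}=0$ (via the semiprime left Goldie property) is a point the paper leaves implicit but is exactly the right check.
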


\begin{prop}\label{prop imagies of ideals}
Let $\si$ be an endomorphism of a prime left noetherian ring such that $\si(R)$ contains a nonzero ideal $I$ of $R$. Then, for any natural number $n\geq 1$ we have:
 \begin{enumerate}
   \item  $\si^n (R)$ contains a nonzero ideal $I_n$ of $R$ such that $I_{n+1}\subseteq \si(I_n)$;
   \item there exists a nonzero ideal $J$ of $R$ such that  $0\ne\si^i(J)$ is an ideal of $R$, for all $1\leq i\leq n$.
        \end{enumerate}
\end{prop}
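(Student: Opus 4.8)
The plan is to first reduce to the case that $\si$ is injective, then construct the ideals $I_n$ by induction using a symmetric ``sandwich'' product, and finally obtain $J$ in part (2) by pulling $I_n$ back along the isomorphism $\si^n\colon R\to\si^n(R)$. For the reduction, I would first record that, since $R$ is prime, every nonzero ideal $I$ of $R$ is essential as a left ideal and satisfies $\an{I}=\ran{I}=0$: indeed $I\cdot\ran{I}=0$ with $I\ne 0$ forces $\ran{I}=0$ by primeness (symmetrically for $\an I$), and for any nonzero left ideal $J$ one has $0\ne IJ\subseteq I\cap J$. Hence $\si$ has a large image in the sense of Definition \ref{def of large image}, so Theorem \ref{thm prime injective} applies and $\si$ is injective. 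Consequently each iterate $\si^n$ restricts to a ring isomorphism $R\to\si^n(R)$, and $\si$ maps nonzero ideals of $R$ onto nonzero ideals of $\si(R)$; I will use these facts freely.

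For part (1) I would argue by induction on $n$, taking $I_1=I$. Given a nonzero ideal $I_n$ of $R$ with $I_n\subseteq\si^n(R)$, note that $I_n$ is automatically an ideal of the subring $\si^n(R)$, hence $\si(I_n)$ is a nonzero ideal of $\si(R)$ and $\si(I_n)\subseteq\si^{n+1}(R)$. I would then set $I_{n+1}=I\,\si(I_n)\,I$. This is a two-sided ideal of $R$, since $R\,I_{n+1}\,R=(RI)\,\si(I_n)\,(IR)\subseteq I\,\si(I_n)\,I$; it is contained in $\si(I_n)$ (hence in $\si^{n+1}(R)$), because $I\subseteq\si(R)$ and $\si(I_n)$ is an ideal of $\si(R)$, so $I\,\si(I_n)\,I\subseteq\si(R)\,\si(I_n)\,\si(R)\subseteq\si(I_n)$; and it is nonzero, because if $I\,\si(I_n)\,I=0$ then, choosing $0\ne x\in\si(I_n)$, we get $IxI=0$, whence $Ix\subseteq\an{I}=0$ and then $x\in\ran{I}=0$, a contradiction. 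This closes the induction and yields the chain $I_{n+1}\subseteq\si(I_n)$ required in~(1).

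For part (2), fix $n\ge 1$ and take $I_n$ as produced in~(1). Since $\si^n\colon R\to\si^n(R)$ is a ring isomorphism, $J:=(\si^n)^{-1}(I_n)$ is a nonzero ideal of $R$ with $\si^n(J)=I_n$. For $1\le i\le n$ one has $\si^n(R)\subseteq\si^{n-i}(R)$, so $I_n$ is an ideal of the subring $\si^{n-i}(R)$; since $\si^{n-i}\colon R\to\si^{n-i}(R)$ is a ring isomorphism carrying $\si^i(J)$ onto $\si^n(J)=I_n$, it follows that $\si^i(J)$ is an ideal of $R$, and it is nonzero by injectivity of $\si$. Thus $J$ has the desired property, completing the proof.

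I expect the only genuinely delicate points to be the following two. First, in part (1) one must verify that the inductive choice $I_{n+1}=I\,\si(I_n)\,I$ is a two-sided ideal of $R$ and not merely of $\si(R)$; this is precisely why a symmetric sandwich is forced rather than a one-sided product, and keeping it nonzero requires the primeness argument via $\an{I}=\ran{I}=0$. Second, in part (2) the key observation is that transporting $I_n$ back along the ring isomorphism $\si^n$ automatically makes every intermediate image $\si^i(J)$ a two-sided ideal of $R$; everything else is routine.
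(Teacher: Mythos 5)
Your proposal is correct and follows essentially the same route as the paper: the same reduction to injectivity via Theorem \ref{thm prime injective}, the same inductive sandwich $I_{1}=I$, $I_{n+1}=I\,\si(I_n)\,I$, and the same choice $J=\si^{-n}(I_n)$ for part (2). You merely spell out in more detail the verifications (nonzeroness via $\an{I}=\ran{I}=0$, and why $\si^i(J)$ is an ideal of $R$) that the paper leaves implicit.
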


\begin{proof}(1)  By assumption $0\ne I\subseteq \si(R)$ and Theorem \ref{thm prime injective} shows that $\si$ is injective.  We construct $I_n$ by induction as follows: $I_1=I$ and $I_{n+1}=I\si(I_n)I$, for $n\ge 1$.  The injectivity of $\si$ and the primeness of $R$ show that $I_{n+1}$ is a nonzero ideal of $R$.  Moreover,
making use of the induction hypothesis we have: $I_{n+1}=I\si(I_n)I\subseteq
\si (R)\si(I_n)\si(R)\subseteq \si(I_n)\subseteq \si^{n+1}(R)$.  This gives the proof of (1).

 (2)   By (1), there exists a nonzero ideal $I_n$ of $R$ contained in $\si^n(R)$. It is enough to take $J=\si^{-n}(I_n)$.
\end{proof}
If $n\geq 1$ and $J$, $J'$ are ideals of $R$ such that $\si^i(J)$ and $\si^i(J')$ are ideals of $R$, for $0\leq i\leq n$, then $J+J'$ also has this property. This means that, for any $n\geq 1$, there exists the largest ideal $J_n$ of $R$ such that $\si^i(J_n)$ is an ideal of $R$, for $0\leq i\leq n$. Notice that, by the construction, $J_{n+1}\subseteq J_n$, for every $n\geq 1$. Therefore $\si^n(\bigcap_{i=1}^\infty J_i)=\si^n(\bigcap_{i=n}^\infty J_i)=\bigcap_{i=n}^\infty \si^n(J_i)$ is an ideal of $R$, for all $n\geq 1$ and Proposition \ref{prop imagies of ideals} and Theorem \ref{thm automorphism} give the following:
\begin{corollary}
  Suppose that $\si$ is an endomorphism of a prime left noetherian ring such that $\si(R)$ contains a nonzero ideal of $R$. Let  $J_n$, where $n\in \mathbb{N}$, denote the largest ideal of $R$ such that $\si^i(J_n)$ is an ideal of $R$, for any $0\leq i\leq n$. Then all $J_n$'s are nonzero and $\si$ is an automorphism of $R$ iff $\bigcap_{i=1}^\infty J_i\ne 0$.
\end{corollary}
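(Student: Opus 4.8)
The plan is to reduce the corollary to Proposition~\ref{prop imagies of ideals} and Theorem~\ref{thm automorphism}, carrying out in detail the observations made just before the statement. First I would note that $\si$ is injective: since $R$ is prime, the nonzero ideal $I\subseteq\si(R)$ is an essential left ideal, and as $R$ is semiprime left noetherian any essential left ideal has zero right annihilator, so $\si$ has a large image in the sense of Definition~\ref{def of large image} and Theorem~\ref{thm prime injective} applies. Injectivity of $\si$ will be used throughout, notably in the form that $\si$ commutes with intersections of arbitrary families of subsets of $R$.

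Next I would check that each $J_n$ is a well-defined nonzero ideal. The family of ideals $V$ of $R$ with $\si^i(V)$ an ideal of $R$ for all $0\le i\le n$ is closed under finite sums, so the largest such ideal $J_n$ exists; and Proposition~\ref{prop imagies of ideals}(2) produces a nonzero member of this family, hence $J_n\ne 0$. Any $V$ witnessing the property at level $n+1$ also witnesses it at level $n$, so $J_{n+1}\subseteq J_n$ and we obtain a descending chain $J_1\supseteq J_2\supseteq\cdots$.

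Now set $J=\bigcap_{i=1}^\infty J_i$. Since the chain is descending, $J=\bigcap_{i\ge n}J_i$ for every $n$, and injectivity of $\si$ gives $\si^n(J)=\bigcap_{i\ge n}\si^n(J_i)$; for $i\ge n$ each $\si^n(J_i)$ is an ideal of $R$, so $\si^n(J)$ is an ideal of $R$ for every $n\ge 0$. If $J\ne 0$, put $L=\si(J)$: then $L\subseteq\si(R)$, $L$ is a nonzero ideal of $R$ (nonzero because $\si$ is injective), hence essential as a left ideal since $R$ is prime, and $\si^n(L)=\si^{n+1}(J)$ is an ideal of $R$ for every $n\in\mathbb{N}$. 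Thus $L$ satisfies condition~(2) of Theorem~\ref{thm automorphism}, and that theorem yields that $\si$ is an automorphism. Conversely, if $\si$ is an automorphism, then so is $\si^i$ for every $i$, hence $\si^i(V)$ is an ideal of $R$ for every ideal $V$; therefore $J_n=R$ for all $n$ and $\bigcap_{i=1}^\infty J_i=R\ne 0$.

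I do not expect a genuine obstacle: the closure under sums, the descending chain, and the interchange of $\si^n$ with the intersection are all immediate once $\si$ is known to be injective, and the real content is borrowed from Proposition~\ref{prop imagies of ideals}(2) (nonvanishing of the $J_n$) and Theorem~\ref{thm automorphism} (passage from the stabilized intersection to surjectivity). The one point to be careful about is to feed $L=\si(J)$, and not $J$ itself, into Theorem~\ref{thm automorphism}, since that theorem requires $\si^n(L)$ to be an ideal for \emph{all} $n\in\mathbb{N}$, i.e.\ after the extra shift.
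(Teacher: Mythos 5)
Your proof is correct and follows essentially the same route as the paper, which establishes the corollary via the paragraph preceding it (existence and descent of the $J_n$, injectivity to commute $\si^n$ with the intersection) together with Proposition~\ref{prop imagies of ideals}(2) and Theorem~\ref{thm automorphism}. Your explicit choice of $L=\si(J)$ rather than $J$ itself, to guarantee $L\subseteq\si(R)$, is a small point the paper leaves implicit, and you handle it correctly.
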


The following theorem records another situation  when every  endomorphism with a large image has to be an automorphism.

\begin{theorem}\label{PID}
 Suppose $\si$ is an endomorphism of a left principal
 ideal domain $R$ (left PID for short). If
 $\si(R)$ contains a nonzero left ideal $L$ of $R$,  then $\si$ is an automorphism of $R$.
\end{theorem}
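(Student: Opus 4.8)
The plan is to deduce injectivity from Section~1 and then obtain surjectivity by a short module-theoretic argument, avoiding the Cohn-Jordan extension.

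A left PID is a prime left noetherian ring, and, being a domain, every nonzero left ideal of $R$ is essential and has zero right annihilator. Hence $\si$ has a large image in the sense of Definition~\ref{def of large image}, so Theorem~\ref{thm prime injective} gives that $\si$ is injective. Put $S:=\si(R)$; then $\si\colon R\to S$ is a ring isomorphism, so $S$ is again a left PID. Since $R$, being a domain, is semiprime left Goldie, Proposition~\ref{properties of large endomorhisms}(3) tells us that $S$ has the same left quotient ring as $R$, say $D:=Q(S)=Q(R)$, and Proposition~\ref{properties of large endomorhisms}(5) tells us that $R$ is noetherian, in particular finitely generated, as a left $S$-module.

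Next I would analyse $R$ as a left $S$-module. It is finitely generated and torsion-free (as $R$ is a domain containing $S$), hence, by the structure theory of finitely generated torsion-free modules over a noncommutative left principal ideal domain, free, say $R\cong\,{}_{S}S^{k}$. To see that $k=1$, note that $S\subseteq R\subseteq D=Q(S)$, and that, because $Q(S)=Q(R)$, every nonzero $r\in R$ can be written $r=p^{-1}q$ with $p,q\in S$ and $p\neq 0$; then $0\neq pr=q\in Sr\cap S$, so $S$ is an essential left $S$-submodule of $R$. Since ${}_{S}S$ is uniform (a left PID is left Ore), it follows that $R$ is uniform as a left $S$-module, which forces $k=1$. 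Thus $R\cong\,{}_{S}S$, i.e. $R=Su$ for some $u\in R\setminus\{0\}$.

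Finally I would finish the argument inside the domain $R$. Writing $1=s_0u$ with $s_0\in S$ and multiplying on the left by $u$ gives $(us_0-1)u=0$; since $u\neq 0$, this forces $us_0=1$, so $u$ is a unit of $R$ with $u^{-1}=s_0\in S$. From $R=Su$ we obtain $S=Suu^{-1}=Ru^{-1}=R$, that is, $\si(R)=R$; together with injectivity this shows $\si$ is an automorphism of $R$.

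The step I expect to be the real crux is the module-theoretic input of the third paragraph, namely that a finitely generated torsion-free left module over a left PID is free; this rests on the classical structure theory of finitely generated modules over noncommutative principal ideal domains. It is in combination with this fact that the hypothesis ``$\si(R)$ contains a nonzero left ideal of $R$'' is genuinely used: that hypothesis enters via Proposition~\ref{properties of large endomorhisms}(3), which secures $Q(\si(R))=Q(R)$ and hence both the essentiality of $S$ in $R$ and the conclusion that the rank is $1$. Without it the argument fails; for instance, if $R=K[x]$ and $\si(x)=x^2$, then $\si(R)$ is still a left PID over which $R$ is free, but now of rank~$2$, and $\si$ is not surjective.
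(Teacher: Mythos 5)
Your reduction to injectivity is fine and matches the paper: in a left PID every nonzero left ideal is essential and has zero right annihilator, so $\si$ has a large image in the sense of Definition~\ref{def of large image} and Theorem~\ref{thm prime injective} applies. The gap is in the crux step you yourself single out: the claim that a finitely generated torsion-free left module over a left PID is free. The classical structure theory you invoke is a theorem about \emph{two-sided} PIDs, where the ring is left and right Ore; it does not survive the passage to one-sided PIDs. By a theorem of Gentile on one-sided quotient fields, a left Ore domain has the property that every finitely generated torsion-free left module embeds in a free module if and only if the domain is also \emph{right} Ore, and a left PID need not be right Ore --- the paper's own example $K[x;\si]$ with $\si$ a non-surjective field endomorphism, given immediately after Theorem~\ref{PID}, is exactly such a ring. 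So for a general left PID $S$ the implication ``finitely generated torsion-free $\Rightarrow$ free'' is unavailable, and your third paragraph does not stand as written. (What is true over a left PID is that submodules of free left modules are free, since every left ideal is principal and hence free of rank at most one; but torsion-freeness alone does not place ${}_{S}R$ inside a free module.)

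The gap is easy to close, and the repair uses the hypothesis $L\subseteq\si(R)$ more directly than your detour through $Q(S)=Q(R)$. Pick $0\ne a\in L$ and consider right multiplication $r\mapsto ra$. This is an injective homomorphism of left $S$-modules from $R$ onto $Ra$, and $Ra\subseteq L\subseteq S$ is stable under left multiplication by $R\supseteq S$, so $Ra$ is a nonzero left ideal of the left PID $S=\si(R)$; hence $Ra=Sc$ for some $c$, and ${}_{S}R\cong{}_{S}Sc\cong{}_{S}S$ is free of rank one. This replaces both your appeal to the structure theory and your uniform-dimension argument for $k=1$; your final paragraph (writing $R=Su$, deducing that $u$ is a unit with inverse $s_0\in S$, hence $S=R$) then goes through verbatim. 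Note that after this repair you no longer need Proposition~\ref{properties of large endomorhisms}(3) or (5): the resulting argument is, in substance, the paper's own proof, which takes $0\ne a\in L$, writes $Ra=\si(R)c$, obtains $a=dc$ with $d\in\si(R)$, cancels the regular element $c$ from $Rdc=\si(R)c$ to get $Rd=\si(R)\ni 1$, and concludes that $d$ is a unit of the domain $R$, so $\si(R)=Rd=R$.
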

\begin{proof} Suppose  $L$ is a nonzero left ideal of $R$ such that $L\subseteq \si(R)$. Hence,  by Theorem \ref{thm prime injective},  $\si$ is injective.

  Let $0\ne a\in L$.
 Then $ Ra\subseteq \si(R)$ and $ Ra$ is a principal left ideal of  $\si(R)$, as the ring $\si(R)$ is also a left PID. Thus there exists $c\in \si(R)$ such that $Ra=\si(R)c$.
 In particular $a=dc$, for some $d\in \si(R)$ and $Rdc=\si(R)c$.
 Since $R$ is a domain, $c$ is regular and we get $Rd=\si(R)$. Thus,
 as $1\in\si(R)$, we get $R=Rd=\si(R)$. This shows that $\si$ is onto.
\end{proof}

In the case $R$ is a left Ore domain we have the following:
\begin{prop} Let $\si$ be an injective endomorphism of a left Ore domain. If $\si(R)$ contains a nonzero one-sided ideal of $R$, then the extension of $\si$ to the division ring of quotients $D$ of $R$ is an automorphism of $D$.
\end{prop}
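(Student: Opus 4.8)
The plan is to first observe that $\si$ extends to an endomorphism of $D=Q(R)$, and then to show that its image $\si(D)$, which is a division subring of $D$, must equal $D$.

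First, since $R$ is a domain, its regular elements are exactly the nonzero ones, so $\mathcal{C}(R)=R\setminus\{0\}$, and injectivity of $\si$ gives $\si(\mathcal{C}(R))\subseteq\mathcal{C}(R)$. Hence, by the universal property of Ore localization (as in the opening of the proof of Theorem \ref{prop. ext. to Q(R)}), $\si$ extends uniquely to an endomorphism of $D$, again denoted $\si$, via $\si(c^{-1}d)=\si(c)^{-1}\si(d)$ for $c,d\in R$ with $c\neq 0$. Being a unital homomorphism out of a division ring, this extended $\si$ is injective; therefore $\si(D)$ is a division subring of $D$, and in particular for $0\neq a\in\si(D)$ we have $a^{-1}\in\si(D)$ (the inverse taken inside $\si(D)$ agrees with that of $D$ by uniqueness of inverses).

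Next, I would bring in the one-sided ideal. Let $L\subseteq\si(R)$ be the given nonzero one-sided ideal of $R$ and fix $0\neq a\in L$. Then $a\in\si(R)\subseteq\si(D)$, so $a^{-1}\in\si(D)$. If $L$ is a left ideal, then $Ra\subseteq L\subseteq\si(D)$, and since $\si(D)$ is closed under multiplication we obtain $R=(Ra)a^{-1}\subseteq\si(D)$; if $L$ is a right ideal, the mirror computation $R=a^{-1}(aR)\subseteq\si(D)$ applies. In either case $R\subseteq\si(D)$.

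Finally, because $D$ is the division ring of left quotients of $R$, every $x\in D$ has the form $x=c^{-1}d$ with $c,d\in R$ and $c\neq 0$; as $c,d\in R\subseteq\si(D)$ and $c^{-1}\in\si(D)$, we get $x\in\si(D)$. Thus $\si(D)=D$, so the extension of $\si$ to $D$ is an automorphism. I do not anticipate a real obstacle here: the argument hinges entirely on the elementary fact that $\si(D)$ is a division subring, and the only spots needing attention are checking that the extension in the first step is well defined and handling the left- and right-ideal cases of the second step, which proceed symmetrically.
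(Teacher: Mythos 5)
Your proposal is correct and follows essentially the same route as the paper: extend $\si$ to $D$ via injectivity, observe that $\si(D)$ is a division subring containing the nonzero one-sided ideal, and deduce $R\subseteq\si(D)$ and hence $\si(D)=D$. You merely spell out the details that the paper compresses into ``implies easily.''
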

\begin{proof}
 Injectivity of $\si$ implies that $\si$ extends to an endomorphism of $D$. The assumption implies that there exists $0\ne c\in R$ such that either $cR$ or $Rc$ are contained in $\si(R)\subseteq \si(D)\subseteq D$. The fact that  $\si(D)$ is a division ring implies easily that $R\subseteq \si(D)$ and $\si(D)=D$.
\end{proof}

The following example presents a left PID $R$ with an injective
endomorphism $\si$ such that $\si(R)$ contains a nonzero right ideal
and $\si$ is not onto. Compare also this example with Theorem \ref{PID}.

\begin{example}
 Let $K$ be a field with an endomorphism $\si$ which is not onto.
 Consider the skew polynomial ring $R=K[x;\si]$ (with coefficients written on the left). We can extend $\si$ to $R$ by setting
 $\si(x)=x$. Then $R$ is left PID and $\si(R)x$ is a right ideal
 of $R$ contained in $\si(R)$.
\end{example}

In the corollary below we sum up obtained results in the special case of prime rings.
\begin{corollary}Suppose $R$ is a prime left noetherian ring. Let $\si$ be an endomorphism of  $R$ having a large image.
 Then:
 \begin{enumerate}
   \item  $\si$ is injective and extends to an automorphism of the classical left quotient ring of $R$;
   \item If $\si(L)=L$, for a certain essential left ideal of $R$, then $\si$ is automorphism of $R$;
   \item If $R$ is left PID, then $\si $ is an automorphism of $R$.
 \end{enumerate}
\end{corollary}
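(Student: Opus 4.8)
The plan is simply to assemble the three assertions from results already proved, using that a prime left noetherian ring is in particular semiprime, left Goldie, and possesses left Krull dimension, and that for such a ring every essential left ideal automatically has zero right annihilator (as remarked before Proposition \ref{properties of large endomorhisms}), so that ``large image'' here really does supply an essential left ideal $L\subseteq\si(R)$ with $\ran{L}=0$.

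For (1), I would first observe that $R$ is semiprime with left Krull dimension, so Theorem \ref{thm prime injective} applies and, since $R$ is prime, gives that $\si$ is injective. Then $R$ is semiprime left Goldie and $\si$ is an injective endomorphism with large image, so condition (1) of Theorem \ref{prop. ext. to Q(R)} holds; the last clause of that theorem then yields the canonical extension of $\si$ to an automorphism of the left classical quotient ring $Q(R)$. For (2), the hypothesis $\si(L)=L$ for an essential left ideal $L$ of the semiprime left noetherian ring $R$ is exactly the hypothesis of Corollary \ref{cor. endo implies auto}, which directly gives that $\si$ is an automorphism. (Alternatively one can invoke Theorem \ref{thm automorphism}: by part (1) $\si$ is injective, and $\si(L)=L$ forces $\si^n(L)=L$ to be a left ideal of $R$ for every $n$, so condition (2) of that theorem is met.) For (3), a left PID is a domain, hence prime, and ``large image'' means in particular that $\si(R)$ contains a nonzero left ideal of $R$, so Theorem \ref{PID} immediately gives that $\si$ is an automorphism.

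There is no genuine obstacle here; the corollary is a summary. The only point requiring care is the bookkeeping described above, namely checking that the prime left noetherian hypothesis upgrades to the Goldie and Krull-dimension hypotheses demanded by Theorems \ref{thm prime injective} and \ref{prop. ext. to Q(R)}, and that ``large image'' in the sense of Definition \ref{def of large image} delivers the essential left ideal inside $\si(R)$ required by Corollary \ref{cor. endo implies auto} and Theorem \ref{PID}. No new argument is needed beyond citing these results.
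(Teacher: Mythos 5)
Your proposal is correct and matches the paper's intent exactly: the corollary is stated there without proof as a summary of earlier results, and the citations you assemble (Theorem \ref{thm prime injective} plus Theorem \ref{prop. ext. to Q(R)} for (1), Corollary \ref{cor. endo implies auto} for (2), Theorem \ref{PID} for (3)) are precisely the intended ones. The bookkeeping you note about prime left noetherian implying semiprime left Goldie with left Krull dimension is the only content required, and you handle it correctly.
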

The above suggests the following:
\begin{Problem}
  Does there exist a prime left noetherian ring (or a left noetherian domain) with an endomorphism $\si$ such that $\si$ has a large image and $\si$ is not an automorphism of $R$.
  \end{Problem}
  The question seems to be interesting even in the case $R$ is a polynomial ring $K[X]$ over a field $K$ in the finite set $X=\{x_1,\ldots,x_n\}$ of indeterminates. Let $\tau \colon K[X]\rightarrow K[X]$ be a $K$-linear endomorphism. In the case $\# X=1$, Theorem \ref{PID} says that $\tau$ has to be an automorphism of $K[X]$. In general case, Proposition \ref{properties of large endomorhisms}(3)  shows that $K(X)$ is the quotient field of $K[\tau(X)]$.  It is known (Cf. Theorem 2.1 \cite{BCW}) that in this case the Jacobian Conjecture holds, i.e. $\tau$ has to be an automorphism, provided the Jacobian $J(\tau)\in K^*$.

     When char$K=0$, another special case of the question above  is: ``Does the Dixmier Conjecture, that every endomorphism of the   Weyl  algebra $A_1(K)$ is an automorphism, hold for endomorphisms having  large images''. Theorem  \ref{thm automorphism} implies  that this is exactly the case when there exists a nonzero left ideal  $L$ of $A_1(K)$, such that  $\si^n(L)$ is a left ideal of $A_1(K)$, for every $n\geq 1$.

We will present below examples of injective endomorphisms which are not automorphisms but have  large  images when $R$ is a prime left Goldie ring or even a commutative domain. The first one is a commutative local domain.
\begin{example}\label{local}
 Let $k(x)$ denote the field of rational functions over a field $k$ and
 $K$ its algebraic extension
 $K=k(x)(x^{\frac{1}{2^n}}\mid n\in \mathbb{N})$. Let $\si$ stand for the
  $k$-linear automorphism of $K$ given by
  $\si(x^{\frac{1}{2^n}})=x^{\frac{1}{2^{n-1}}}$. Then $\si$ can
  be extended to an automorphism of the power series ring
  $K[[y]]$, by setting $\si(y)=y$.
  Define $R=k(x)+K[[y]]y\subseteq K[[y]]$. Then $R$ is a local ring
  with the maximal ideal $M=K[[y]]y$, the restriction of $\si$ to $R$ is
   an injective endomorphism of $R$, which is not onto. Clearly $M=\si(M)$ is an
   ideal of $R$ contained in $\si(R)=k(x^2)+M\subset R$.
\end{example}
 The ring $R$ in the above example is not noetherian as otherwise, by Theorem \ref{thm automorphism}, $\si$ would be an automorphism of $R$. It is also easy to check directly that   if
  $I_n=(x^\frac{1}{2^i}y\mid 0\le i \le  n-1)$, $n\in \mathbb{N}$,
  then $x^\frac{1}{2^n}y\in I_{n+1}\setminus I_n$.

  In what follows, $U(R)$ will denote the unit group of $R$. Let us notice that $\si(U(R))\ne U(R)$ in Example \ref{local}. In fact let us observe that
  \begin{remark}\label{local ring automorphism on units}
   Let $R$ be local ring and $\si$ an injective endomorphism of
   $R$ such that $\si(U(R))=U(R)$. Then $\si$ is an automorphism
   of $R$. Indeed,  let $m\in M=R\setminus U(R)$. Since $R$ is local, $M$ is an ideal
of $R$. This implies that $1+m\not\in M$. Thus, by the assumption,
there is $a\in R$ such that $\si(a)=1+m$, i.e. $m=\si(a-1)\in
\si(R)$.
  \end{remark}

The following example offers a commutative domain $R$ with an
injective endomorphism $\si$ which is not onto  such that $\si(U(R))=U(R)$ and $\si(I)=I$,
for some nonzero ideal  $I$.
\begin{example}\label{ex. commuative}
Let $R=\mathbb{Z}+\mathbb{Z}x+\mathbb{Z}[\frac{1}{2}][x]x^2$. Let
$\si$ be the endomorphism of $R$ defined by $\si(x)=2x$. Then
 $I=\mathbb{Z}[\frac{1}{2}][x]x^2$
and $J=2\mathbb{Z}x+\mathbb{Z}[\frac{1}{2}][x]x^2$ are ideals of
$R$ contained in
$\si(R)=\mathbb{Z}+2\mathbb{Z}x+\mathbb{Z}[\frac{1}{2}][x]x^2$.
 Notice that  $\si(I)=I$ and $\si(J)\subset J$.
\end{example}

The ring $R$ in the above example is not noetherian. Indeed if
  $I_n=(\frac{1}{2^i}x^2\mid 0\le i \le  n-1)$, $n\in \mathbb{N}$,
  then $\frac{1}{2^n}x^2\in I_{n+1}\setminus I_n$.

Theorem \ref{prop. ext. to Q(R)} implies that if $R$ is a semiprime left Goldie ring with an injective endomorphism $\si$ having a large image, then its Cohn-Jordan extension $A=A(R,\si)$ is contained in $Q(R)$. Example \ref{ex. commuative} shows that   inclusions $R\subseteq A\subseteq Q(R)$  can be strict.

Example  \ref{ex. commuative}  can be generalized to the following
construction:
\begin{example}
 Let $T=\bigoplus_{n=0}^\infty T_n$ be an $\mathbb{N}$-graded ring
 with a graded automorphism $\si$, i.e. automorphism such that
 $\si(T_n)=T_n$. Let $R_0$ be a subring of $T_0$ such that $\si(R_0)\subset
 R_0$ and $R=R_0\oplus \bigoplus_{n=1}^\infty T_n$. Then $\si$ is
 an injective endomorphism of $R$ which is not an automorphism and
 $R$ contains an ideal $M=\bigoplus_{n=1}^\infty T_n$ of $R$ such
 that $\si(M)=M$.
\end{example}

It is easy to construct prime rings  or even domains, as in the above example. Take
any prime ring (or a domain) $R_0$  with an injective endomorphism $\si$ which is not onto. Let
$A_0=A(R_0,\si)$ be the Cohn-Jordan extension of $R_0$. Then $A_0$ is  a
prime ring (a domain) and consequently  $T=A_0[x]=\bigoplus_{n=0}^\infty T_n$ is also such a ring, where $T_n=A_0x^n$. Then one can extend $\si$ to and automorphism of $T$ and consider $R=R_0\oplus \bigoplus_{n=1}^\infty T_n$.

This construction never leads to noetherian rings. Notice that if $A_0$ would be
finitely generated as a left $R_0$-module, then $A_0=R_0$, i.e. $\si$ would be an
automorphism of $R_0$. Indeed, if $A_0=R_0a_1+\ldots +R_0a_m$, then there
would exist $n\geq 1$ such that $\si^n(a_i)\in R_0$, for $1\leq i\leq m$.
Then $A_0=\si^n(A_0)=\si^n(R_0a_1+\ldots +R_0a_m)\subseteq R_0$.

By the above, there are $a_i\in R_0$, $i\in\mathbb{N}$, such that
$R_0a_1+\ldots +R_0a_n\subset R_0a_1+\ldots +R_0a_{n+1}$, for all
$n$. Let $I_n$ denote  the left  ideal   of
$R=R_0+A_0[x]x$ generated by elements $a_1x,\ldots, a_nx$. Then
$I_n\subset I_{n+1}$, i.e. $R$ is not noetherian.


\begin{thebibliography}{99}
\bibitem{BCW}  Bass H., Connell E.H., Wright D., The Jacobian Conjecture:
Reduction of Degree and Formal Expansion of the Inverse, Bull. Amer. Math. Soc.  7 (2), 1982, 287-330;
%

\bibitem{BFL}
   Beidar K.L.,  Fong Y.,    Lee P.-H.,  Wong T.-L.,  On Additive Maps of Prime Rings Satisfying the Engel Condition, Comm.  Algebra 25 (12), 1997,  3889-3902;

\bibitem{B}   Bresar M., One-Sided Ideals and Derivations of Prime Rings, Proc. Amer. Math. Soc. 122 (4), 1994, 979-983;

    \bibitem{B1}   Bresar M.,   Chebotar M.A.,  Martindale W.S., Functional Identities, Birkh\"{a}user, Basel, 2007;

\bibitem{BMM}  Bresar M., Martindale 3rd W.S.,  Miers  R.C., Maps Preserving $n$-th Powers, Comm. Algebra 26 (1),   1998,  117-138;

\bibitem{H} Hiremath V.A., Hopfian Rings and Hopfian Modules, Indian J. Pure and App. Math. 17 (7), 1986, 895-900;

\bibitem{Ja}
Jategaonkar, A.V.,
 Skew Polynomial Rings Over Orders In Artinian Rings,
 J. Algebra  21 (1), 51-59, 1972;

\bibitem{Jo}
 Jordan D.A., Bijective Extensions of Injective Rings
endomorphisms, J. London Math. Soc. 25 (3),   1982,   435-448;



\bibitem{LM} Leroy A.,  Matczuk J., D\'{e}rivations et automorphismes alg\'{e}briques
d'anneaux premiers, Comm. Algebra 13 (6),  1985, 1245-1266;

\bibitem{Ma} Matczuk J., S-Cohn-Jordan Extensions, Comm. Algebra 35 (3), 2007, 725-746;

\bibitem{MC} McConnell J.C.,      Robson J.C., Noncommutative Noetherian Rings, Graduate Studies in Mathematics  30, Amer. Math. Soc., 2001;

 \bibitem{T} Tripathi S.P., On the Hopficity of the Polynomial Rings, Proc. Indian Acad. Sci.    108, (2),  1998,   133-136;


 \bibitem{V1}   Varadarajan K., Study of Hopficity in Certain Classes of Rings, Comm. Algebra  28 (2), 2000, 771-783;

 \bibitem{V2}   Varadarajan K., Hopfian and co-Hopfian Objects, Publicacions Matem\`{a}tiques  36,  1992, 293-317;

\bibitem{V3}   Varadarajan K., Some Recent Results on Hopficity Co-Hopficity and Related Properties, International Symposium on Ring Theory, Trends in Math., Birkh\"{a}user, Boston, 2001.

\end{thebibliography}
\end{document}